\numberwithin{equation}{section}
\newtheorem{lemma}{Lemma}
\newtheorem{prop}[lemma]{Proposition}
\newtheorem{theorem}[lemma]{Theorem}
\newtheorem{rem}[lemma]{Remark}
\newtheorem{assu}[lemma]{Assumption}
\newcommand{\Om}{\Omega}
\newcommand{\lam}{\lambda}
\newcommand{\be}{\begin{eqnarray}}
\newcommand{\ee}{\end{eqnarray}}
\newcommand{\beq}{\begin{equation}}
\newcommand{\eeq}{\end{equation}}
\newcommand{\ben}{\begin{eqnarray*}}
\newcommand{\een}{\end{eqnarray*}}
\numberwithin{lemma}{section}
\begin{document}
\title{A data-driven model reduction method for parabolic inverse source problems and its convergence analysis}
\author{
Zhongjian Wang\thanks{ Department of Statistics and CCAM, The University of Chicago, Chicago, IL 60637, USA. (zhongjian@statistics.uchicago.edu).
}
\and
Wenlong Zhang\thanks{Corresponding author. Department of Mathematics, Southern University of Science and Technology (SUSTech), 1088 Xueyuan Boulevard, University Town of Shenzhen, Xili, Nanshan, Shenzhen, Guangdong Province, P.R.China. (zhangwl@sustech.edu.cn).
}
\and
Zhiwen Zhang\thanks{Corresponding author. Department of Mathematics, The University of Hong Kong, Pokfulam Road, Hong Kong SAR, P.R.China. (zhangzw@hku.hk).
}}

\date{}
\maketitle

\begin{abstract}
In this paper, we propose a data-driven model reduction method to solve parabolic inverse source problems efficiently. Our method consists of offline and online stages. In the off-line stage, we explore the low-dimensional structures in the solution space of the parabolic partial differential equations (PDEs) in the  forward problem with a given class of source functions and construct a small number of proper orthogonal decomposition (POD) basis functions to achieve significant dimension reduction. Equipped with the POD basis functions, we can solve the forward problem extremely fast in the online stage. Thus, we develop a fast algorithm to solve the optimization problem in the parabolic inverse source problems, which is referred to as the POD algorithm in this paper. Under a weak regularity assumption on the solution of the parabolic PDEs, we prove the convergence of the POD algorithm in solving the forward parabolic PDEs. In addition, we obtain the error estimate of the POD algorithm for parabolic inverse source problems. Finally, we present numerical examples to demonstrate the accuracy and efficiency of the proposed method.  Our numerical results show that the POD algorithm provides considerable computational savings over the finite element method.

\medskip
\noindent \textit{\textbf{AMS subject classification:}} 35R30,  65J20,  65M12, 65N21, 78M34.  
\end{abstract}

%\noindent 
%{\footnotesize {\bf Mathematics Subject Classification}
%(MSC2010): 35R30, 35B30}

%35R30  	Inverse problems for PDEs
%65J20  	Numerical solutions of ill-posed problems in abstract spaces; regularization
%65M60  	Finite element, Rayleigh-Ritz and Galerkin methods for initial value and initial-boundary value problems involving PDEs
%65N21  	Numerical methods for inverse problems for boundary value problems involving PDEs
%65N30  	Finite element, Rayleigh-Ritz and Galerkin methods for boundary value problems involving PDEs
%65M12      Stability and convergence of numerical methods
%78M34  	Model reduction

%\noindent 
{\footnotesize {\bf Keywords}: 
Parabolic inverse source problems;~ regularization method;~ data-driven model reduction;~ proper orthogonal decomposition (POD);~ stochastic error estimate;~ optimal regularization parameter. 
}

%%%%%%%%%%%%%%%%%%%%%%%%%%%%%%%%%%%%%%%%%%%%%%%%%%%
%%%%%%%%%%%%%%%%%%%%%%%%%%%%%%%%%%%%%%%
\section{Introduction}

This paper presents a data-driven model reduction method to solve parabolic inverse source problems and studies the convergence analysis of the proposed method. Inverse problems are very important in physics, engineering, and bioengineering. The inverse source problems, which seeks reconstruction of source from final time observation, have attached much attention of the researchers over the past decades, see an introduction and references in \cite{Isakov2013}. They have been widely studied in the literature, applied to many physical and engineering source identification problems, e.g. migration of groundwater, groundwater pollution detection, control of pollution source and environmental protection  \cite{CHENG2020106213, new3, GER83, new2, new6} and references therein. The accurate recovery pollutant source is crucial to environmental safeguard in cities with high populations \cite{CHENG2010142}. The estimation of the strength of acoustic sources from measurements can be found in e.g. \cite{new6, Badia2011, nelson20}.  The inverse source problems that arise from PDEs are ill-posed in the sense of Isakov and Hadamard  \cite{new6,hadamard}, since the eigenvalues of the elliptic operators decay exponentially fast, especially the lack of stability with respect to the uncertainty in the measurement data is the most difficult challenge for numerical inversions. Namely, a small change in the data may lead to a significant difference in the reconstructed source strength.  Due to the important applications of inverse source problems, numerical methods have been widely explored \cite{CHENG2020106213,Chen-Zhang2021,LZ07,new4,GER83} and references can be found therein. 

 In this paper, we will work on a very practically physical scenario, assuming that the observational measurement is collected point-wisely over a set of distributed sensors located at $\{x_i\}^n_{i=1}$ over the physical domain  \cite{new2, new6, L08, NNR98}. At each sensor, independently additional noise or random error will be considered due to the uncertainty of natural noise, measurement errors, and the other uncertainty of the model itself. We will take one of the realistic approaches for such inverse problems, i.e. optimizing the mean-square error with a proper Tikhonov type regularizations \cite{Chen-Zhang2021, GER83, WY10}. Classical methods such as regression methods \cite{GER83}, linear and nonlinear programming methods \cite{GER83}, linear and nonlinear conjugate gradient methods \cite{AB05, WY10}, Newton-type methods, etc. can be applied during optimization.  For the inverse parabolic source problem, one usually uses iteration optimization methods to find the true source term \cite{Chen-Zhang2021, Johansson2014, Johansson2007}. In each iteration, one has to solve the forward parabolic equation one or two times. However, as the sizes of discrete problems grow (e.g. finite element method (FEM) or finite difference method (FDM)), the computation time will grow rapidly, especially for such evolution problems. As a consequence, the computation of the forward equation will cost the most time throughout the whole procedure. This motivates us to develop efficient model reduction methods to address this issue.

One of the successful model reduction ideas in solving evolution problems is the proper orthogonal decomposition (POD) method \cite{sirovich1987,berkooz1993POD}. The POD method uses the data from an accurate numerical simulation and extracts the most energetic modes in the system by using the singular value decomposition. This approach generates low-dimensional structures that can approximate the solutions of the evolution problems with high accuracy. The POD method has been used to solve many types of PDEs, including linear parabolic equations \cite{volkwein2013proper},  Navier‐Stokes equations \cite{kunisch2001galerkin}, viscous G-equations \cite{gu2021error},  Hamilton–Jacobi–Bellman (HJB) equations \cite{kunisch2004hjb}, and optimal control problem \cite{alla2013time}. The interested reader is referred to \cite{quarteroni2015reduced,benner2015survey,hesthaven2016certified} for a comprehensive introduction of the model reduction methods.

%In this paper, we will develop a data-driven model reduction method to solve the inverse parabolic source problem with observational data at the final time. 
Since the POD method can significantly accelerate the computation of the forward problem compared with the traditional methods, e.g. FEM and FDM, we apply the POD method to solve parabolic inverse source problems with scattered measured data at the final time. 
%We consider a reduced basis method to solve the inverse parabolic source problem with observational data at the final time.    The POD method has been a successful method to solve the forward parabolic equations, which accelerates the forward problem computations compared with the traditional method e.g. finite element or finite difference method.  We apply the POD method to the inverse parabolic source problem with scattered measured data at the final time. 
% to solve the inverse parabolic source problem with uncertain data. We develop a data-driven reduced basis method for solving inverse source problems.   
The key idea is to exploit (model-based)  and construct  (data-based)  the intrinsic approximate low-dimensional structures of the underlying problem that consists of two components.  First, we have a  training component that computes a set of data-driven basis functions (i.e. the POD basis functions) to achieve significant dimension reduction in the solution space. Following up, there is a fast-solving component that solves the optimization problem using the constructed POD basis functions in each iteration.  Hence we achieve an effective data-driven model reduction method in solving the parabolic inverse source problems and overcome the typical computational bottleneck in the traditional methods, e.g. FEM and FDM. 

Then, we study the convergence analysis of the proposed method. In \cite{Chen-Zhang2021}, a general theory of the stochastic convergence of numerical method has been established for a certain type of inverse problem. Based on this framework, we explore the convergence theory in the POD settings.  Specifically, we will prove a relatively weaker convergence of the POD method when the source term only belongs to $L^2(\Omega)$, where $\Omega$ is the computational domain.  Unlike the traditional analysis of the POD method \cite{kunisch2001galerkin} or FEM convergence, we do not assume the higher regularity for parabolic PDE solution $u$, i.e. $u_{tt}$ to be bounded in $L^2(\Omega)$, which is quite strict in many cases. Based on our analysis, we derive the stochastic convergence when applying the POD method to the parabolic inverse source problem with uncertain data. Our analysis in Theorem \ref{thm_convergence} shows that the optimal error of the Tikhonov type least-square optimization problem depends on the noise level, the number of sensors, and without any higher source regularization but the $L^2(\Omega)$ norm of the source term. A self-consistent algorithm to get the optimal smoothing parameter is also established using the POD method to solve the inverse parabolic source problem, motivated by the recent study in \cite{Chen-Zhang, Chen-Zhang2021}. Finally, we conduct numerical experiments to demonstrate the accuracy and efficiency of the proposed method. Several kinds of source functions are involved. Comparing to original optimization with FEM basis functions, the proposed POD method significantly reduces the computational cost and maintains the same accuracy as the FEM.
  
The rest of the paper is organized as follows. In Section 2, we introduce the setting of the parabolic inverse source problems. In Section 3, we develop the data-driven model reduction method for solving parabolic inverse source problems. In Section 4, we provide the error estimate for the proposed method. In Section 5, we present numerical results to demonstrate the accuracy of our method. Finally, concluding remarks are made in Section 6.
%%%%%%%%%%%%%%%%%%%%%%%%%%%%%%%%%%%%%%%
\section{Parabolic inverse source problems}
%%%%%%%%%%%%%%%%%%%%%%%%%%%%%%%%%%%%%%%
To start with, we consider a parabolic equation, 
\begin{equation}\label{zz0}
	\left\{
	\begin{aligned}
		u_t +\mathcal{L}u &= f(x) &\mbox{in } \Omega\times (0, T), \\
		u(x, t)&= 0  &\mbox{on } \partial \Omega\times (0, T),\\
		u(x, 0)&= g(x) &\mbox{in } \Omega\,,
	\end{aligned} 
	\right.
\end{equation}
where $\Omega\subset \mathbb R^d$ $(d=1,2,3)$ is a bounded domain with $C^2$ boundary or a convex domain satisfying the uniform cone condition, $\mathcal{L}$ is a second-order elliptic operator given by $\mathcal{L}u=-\nabla\cdot (a(x)\nabla u) +c(x)u$, and
$g(x)$ is the initial value. We assume the  elliptic operator $\mathcal{L}$ is uniform elliptic, i.e. there exist $a_{\min}, a_{\max}>0$, such that $a_{\min}<a(x)<a_{\max}$ for all $x \in \Omega$. Moreover, we assume $a(x)\in C^{1}(\bar{\Omega})$, $c(x)\in C(\bar\Omega)$ and $c(x)\geq 0$. 

Let $u$ be the solution of the parabolic equation \eqref{zz0}. We define the forward operator $\mathcal{S}:$ $L^2(\Omega)\rightarrow H^2(\Omega)$ by $\mathcal{S}f=u(\cdot,T)$. The forward problem is to compute the solution $u(\cdot,t)$ for $t>0$ with known source term $f(x)$ and known initial condition $g(x)$. 

%Assume $f\in L^2(\Omega)$ is an unknown source term that one wants to reconstruct, $f(x)$ leads to a final time observation $u(\cdot,T)$ controlled by the following parabolic equation with heat conduction,

In the parabolic inverse source problem, $f(x)\in L^2(\Omega)$ is an unknown source term that we want to reconstruct based on the final time measurement $u(\cdot,T)$. To be specific, we will focus on a very practically physical scenario, where we assume that the observational measurement is collected point-wisely over a set of distributed sensors located at $\{x_i\}^n_{i=1}$ over the physical domain $\Om$ (see e.g.\cite{new2, new6, L08, NNR98}). We also assume the measurement data is always blurred by noise and takes the form $m_i=\mathcal{S}f^*(x_i)+e_i$, $i=1, \cdots, n$, where $f^*\in L^2(\Omega)$ is the true source term and $\{e_i\}^n_{i=1}$ are independent and identically distributed (i.i.d.) random variables on a proper probability space ($\mathfrak{X},\mathcal{F},\mathbb{P})$. From \cite{Chen-Zhang2021} and the analysis therein, we know that $\|u\|_{C([0,T];H^2(\Om))}\le C\|f\|_{L^2(\Om)}$. 
According to the embedding theorem of Sobolev spaces, we know that $H^2(\Omega)$ is continuously embedded into $C(\bar\Omega)$ so that  $\mathcal{S}f^*(x)$ is well defined point-wisely for all $x\in \Omega$. Without loss of generality, we assume that the scattered locations $\{x_i\}_{i=1}^n$ are uniformly distributed
in $\Omega$, i.e., there exists a constant $B>0$ such that ${d_{\max}}/{d_{\min}} \leq B$, where 
${d_{\max}}$ and ${d_{\min}}$ are defined by 
\begin{align}\label{aa}
d_{\max}=\mathop {\rm sup}\limits_{x\in \Omega} \mathop {\rm inf}\limits_{1 \leq i \leq n} |x-x_i|  
~~~\mbox{and} ~~ ~
d_{\min}=\mathop {\rm inf}\limits_{1 \leq i \neq j \leq n} |x_i-x_j|.
\end{align}
We denote the inner product between the measurement data and any function $v\in C(\bar\Omega)$ by $(m,v)_n=\frac{1}{n}\sum^n_{i=1}m_iv(x_i)$. Moreover, we denote the inner product between two functions by  $(u,v)_n=\frac{1}{n}\sum^n_{i=1}u(x_i)v(x_i)$ for any $u,v\in C(\bar\Omega)$ and the empirical norm $\|u\|_n=\big(\frac{1}{n}\sum_{i=1}^{n} u^2(x_i)\big)^{1/2}$ for any $u\in C(\bar\Omega)$. 

Equipped with these definitions, we can define the parabolic inverse source problem as to recover the unknown source term $f^*$ from the noisy final time measurement data $m_i=\mathcal{S}f^*(x_i)+e_i$, $i=1,...,n$. We will use the regularization method to solve this parabolic inverse source problem. Specifically, we will look for an approximate solution of the true source term $f^*$ by solving the following least-squares regularized minimization problem: 
\begin{align}\label{para-examp1}
\mathop {\rm min}\limits_{f\in X}         \|Sf-m\|^2_n+\lambda_n \|f\|_{L^2(\Omega)}^2\,,
\end{align}
where $\lambda_n$ is the regularization parameter. 

%\textcolor{blue}{fix the notations of norms in this paper???}\textcolor{red}{removing X and Y?} 
We first recall an existing work \cite{Chen-Zhang2021}, where the optimal stochastic convergence of regularized solutions and finite element solutions to time-dependent parabolic inverse source problems \eqref{zz0} have been studied. The following result represents the stochastic convergences corresponding to the random variables with bounded variance.
\begin{prop}[{Theorem 3.4 in \cite{Chen-Zhang2021}}]\label{para-examthm:2.1}
	Suppose $\{e_i\}^n_{i=1}$ are independent random variables satisfying
	$\mathbb{E}[e_i]=0$ and $\mathbb{E}[e^2_i]\leq \sigma^2$. Let $f_n\in L^2(\Omega)$ be the solution of the least-squares regularized minimization problem \eqref{para-examp1}. 
	Then there exist constants $\lambda_0 > 0$ and $C>0$ such that the following estimates hold 
	for any $0<\lambda_n \leq \lambda_0$:
	\begin{align}
	\mathbb{E} \big[\|\mathcal{S}f_n-\mathcal{S}f^*\|^2_n\big] \leq C \lambda_n \|f^*\|^2_{L^2(\Omega)} + \frac{C\sigma^2}{n\lambda^{d/4}_n}, \\
	\mathbb{E} \big[\|f_n-f^*\|^2_{L^2(\Omega)}\big] \leq C \|f^*\|^2_{L^2(\Omega)} + \frac{C\sigma^2}{n\lambda^{1+d/4}_n},\\
		\mathbb{E} \big[\|f_n-f^*\|^2_{H^{-1}(\Omega)}\big] \leq C \lambda_n^{1/2}\|f^*\|^2_{L^2(\Omega)} + \frac{C\sigma^2}{n\lambda^{1/2+d/4}_n}.
\end{align}
\end{prop}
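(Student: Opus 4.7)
My plan is to use a variational (Galerkin-orthogonality) argument starting from the optimality of $f_n$, and then control the empirical noise inner product by an empirical-process argument exploiting the smoothing property $\mathcal{S}:L^2(\Om)\to H^2(\Om)$. First I would write down the fact that $f_n$ minimizes the functional, so comparing with $f^*$ gives
\beq\label{prop1}
\|\mathcal{S}f_n-m\|_n^2+\lam_n\|f_n\|_{L^2(\Om)}^2\;\le\;\|\mathcal{S}f^*-m\|_n^2+\lam_n\|f^*\|_{L^2(\Om)}^2.
\eeq
Substituting $m_i=\mathcal{S}f^*(x_i)+e_i$ and expanding the square on the left, the $\|e\|_n^2$ terms cancel and I obtain the basic inequality
\beq\label{prop2}
\|\mathcal{S}f_n-\mathcal{S}f^*\|_n^2+\lam_n\|f_n\|_{L^2(\Om)}^2\;\le\;2\,(e,\mathcal{S}f_n-\mathcal{S}f^*)_n+\lam_n\|f^*\|_{L^2(\Om)}^2.
\eeq
Everything then reduces to bounding the random cross term $(e,\mathcal{S}(f_n-f^*))_n$ uniformly for the \emph{random} minimizer $f_n$.

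Next, I would invoke the regularity $\|\mathcal{S}h\|_{H^2(\Om)}\le C\|h\|_{L^2(\Om)}$ stated before the proposition, together with the embedding $H^2(\Om)\hookrightarrow C(\bar\Om)$ for $d\le 3$, so that the functions of interest lie in a ball of $H^2(\Om)$ whose radius is controlled by $\|f_n-f^*\|_{L^2(\Om)}$. To handle the dependence of $f_n$ on $\{e_i\}$, I would use the standard peeling device: for every dyadic level $R=2^k$ bound the supremum of $|(e,\mathcal{S}h)_n|$ over $\{h:\|h\|_{L^2}\le R\}$ by a Dudley-type chaining estimate using the metric entropy of the $H^2$-ball in the empirical $L^2(P_n)$ norm. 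For a ball in $H^2(\Om)\cap H^1_0(\Om)$, Weyl's asymptotics for the eigenvalues of $\mathcal{L}$ give metric-entropy numbers of order $k^{-2/d}$, which after integration produces the characteristic factor $\lam_n^{-d/8}$ in the square-root; squaring yields the scaling $\lam_n^{-d/4}$ appearing on the right of the first estimate. Combining this with the variance bound $\mathbb{E}[e_i^2]\le\sigma^2$ and Young's inequality
\beq
2(e,\mathcal{S}(f_n-f^*))_n\;\le\;\tfrac12\|\mathcal{S}f_n-\mathcal{S}f^*\|_n^2+C\sigma^2/(n\lam_n^{d/4})+(\text{absorbable terms}),
\eeq
one can absorb the empirical norm into the left of \eqref{prop2} and read off the first estimate after taking expectation.

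For the second estimate, I would exploit \eqref{prop2} in the opposite direction: discard the nonnegative $\|\mathcal{S}f_n-\mathcal{S}f^*\|_n^2$ on the left and divide by $\lam_n$, obtaining a bound on $\mathbb{E}\|f_n\|_{L^2(\Om)}^2$ with an extra factor $\lam_n^{-1}$ on the noise term; the triangle inequality $\|f_n-f^*\|_{L^2}^2\le 2\|f_n\|_{L^2}^2+2\|f^*\|_{L^2}^2$ then gives the second line. For the $H^{-1}(\Om)$ estimate I would interpolate between these two: by the spectral representation of $\mathcal{L}$ one has $\|h\|_{H^{-1}(\Om)}\lc \|h\|_{L^2(\Om)}^{1/2}\|\mathcal{S}h\|_{L^2(\Om)}^{1/2}$ (equivalently, $\|h\|_{H^{-1}}^2\lc\|h\|_{L^2}\cdot\|\mathcal{L}^{-1}h\|_{L^2}$), and passing from the empirical norm $\|\cdot\|_n$ to the $L^2(\Om)$ norm by a separate concentration bound (again using the $H^2$-regularity of $\mathcal{S}h$ and the quasi-uniformity condition $d_{\max}/d_{\min}\le B$) produces exactly the intermediate power $\lam_n^{1/2+d/4}$.

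The main obstacle, as in \cite{Chen-Zhang2021}, is the uniform empirical-process step: one needs a metric-entropy / chaining estimate for $\{\mathcal{S}h:\|h\|_{L^2(\Om)}\le 1\}$ that is sharp in the exponent $d/4$, and one must handle the data-dependence of $f_n$ via a peeling argument on the dyadic annuli $\{2^{k-1}<\|f_n\|_{L^2}\le 2^k\}$. Comparing $\|\cdot\|_n$ with $\|\cdot\|_{L^2(\Om)}$ on the $H^2$-smooth image of $\mathcal{S}$ (to translate empirical control into $L^2$ control needed for the $H^{-1}$-bound) is the other technical point; both are exactly where the assumption $f^*\in L^2(\Om)$ only---and not in a higher-regularity space---is felt, and where the quasi-uniform sampling hypothesis on $\{x_i\}_{i=1}^n$ enters.
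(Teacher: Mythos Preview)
The paper does not provide its own proof of this proposition. It is stated as a recall of an existing result---it is explicitly labeled ``Theorem~3.4 in \cite{Chen-Zhang2021}'' and introduced with ``We first recall an existing work \cite{Chen-Zhang2021}\ldots''. There is therefore nothing in the present paper to compare your argument against; the authors simply import the result and move on to use it.

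That said, your outline is the standard route one would take to establish such a stochastic convergence result, and is in the spirit of the framework the authors attribute to \cite{Chen-Zhang2021}: start from the basic inequality \eqref{prop2} obtained by optimality, isolate the empirical-process term $(e,\mathcal{S}(f_n-f^*))_n$, and bound it uniformly over an $H^2$-ball using metric entropy (Weyl asymptotics for the eigenvalues of $\mathcal{L}$ give the entropy exponent $d/4$), with a peeling argument to handle the randomness of $f_n$. The interpolation step for the $H^{-1}$ estimate via $\|h\|_{H^{-1}}^2\lesssim\|h\|_{L^2}\|\mathcal{S}h\|_{L^2}$ and the passage from $\|\cdot\|_n$ to $\|\cdot\|_{L^2(\Omega)}$ using the quasi-uniformity assumption on $\{x_i\}$ are both the natural moves. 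If you want to verify the details, you will need to consult \cite{Chen-Zhang2021} directly; the present paper offers no independent argument.
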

%\begin{theorem}[\cite{Chen-Zhang2021}]\label{para-examthm:4.1}
%	Suppose $\{e_i\}^n_{i=1}$ are independent Gaussian random with variance $\sigma$. Let $f_n\in L^2(\Om)$ be the solution of \eqref{para-examp1} and 
%	$\rho_0=\|f^*\|_{L^2(\Omega)}+\sigma n^{-1/2}$. If we take $\lambda_n$ such that 
%	$\lambda_n^{1/2+d/8}= O(\sigma n^{-1/2}\rho_0^{-1})$,
%	then the following estimates hold for some constant $C>0$:
%	\ben
%	\mathbb{P}(\|Sf_n-Sf^*\|_n\ge \lam_n^{1/2}\rho_0z)\le 2e^{-Cz^2},\ \ \ \ 
%	\mathbb{P}(\|f_n\|_{L^2(\Om)}\ge \rho_0z)\le 2e^{-Cz^2}.
%	\een
%\end{theorem}

A stronger stochastic convergence when $\{e_i\}^n_{i=1}$ are independent Gaussian random with variance $\sigma^2$ can also be found in \cite{Chen-Zhang2021}. Our convergence analysis below also applies to this case. From  Proposition \ref{para-examthm:2.1}, we have an optimal choice of the regularization parameter $\lambda_n$ in \eqref{para-examp1} as follows:
\begin{align}\label{opti-para}
\lam_n^{1/2+d/8}=O\big(\sigma n^{-1/2}\|f^*\|_{L^2(\Om)}^{-1}\big). 
\end{align} 
 It is an a priori estimate with knowing the noise level $\sigma$ and the knowledge of the true source term $f^*$, yet it is our goal to recover the true source term $f^*$ with unknown $\sigma$. The noise level is difficult to measure in many circumstances. In \cite{Chen-Zhang2021}, the authors proposed a self-consistent algorithm to compute the optimal $\lambda_n$ without any knowledge of the noise level $\sigma$ and the true source term $f^*$. Apply the POD method to this algorithm, we will also successfully determine the optimal parameter $\lambda_n$ iteratively. We find the proposed POD method maintains the same accuracy as the FEM.
%We will also apply it to our POD algorithm to show the algorithm successfully capture the solution space \textcolor{red}{under FEM basis}. 
Details can be found at the end of Section \ref{sec:eg1}. 

An effective way to approximate the optimal control problem \eqref{para-examp1} with a proper regularization parameter $\lambda_n$ is the iteration method. For each iteration, one has to solve the forward problem \eqref{zz0} and its adjoint problem for at least one time. In \cite{Chen-Zhang2021}, the FEM was used to approximate \eqref{para-examp1} and the optimal second-order convergence with respect to space has been proved. But as the DOF grows for a discrete method such as FEM and FDM, the iteration method will cost too much computational time. Therefore, we need to design numerical methods that allow us to efficiently and accurately solve \eqref{para-examp1}. 
%We shall develop an efficient model reduction method to achieve this goal. 

%%%%%%%%%%%%%%%%%%%%%%%%%%%%%%%%%%%%%
%%%%%%%%%%%%%%%%%%%%%%%%%%%%%%%%%%%%
%
%
%In this section, we consider the convergence of the discrete method when $e_1,e_2,\cdots,e_n$ are independent sub-Gaussion random variables with parameter $\sigma$. 
%
%The following theorem is the main result of this section.
%
%\begin{theorem}\label{thm_convergence}
%Let $f_h\in V_h$ be the solution of \eqref{d1}. Denote by $\rho_0=\|f^*\|_{X}+\sigma n^{-1/2}$. If we take
%$e(h)\leq C \lambda_n$, $N_h e(h)\leq C \lambda_n^{1-d/4}$ and $\lambda_n^{1/2+d/8}= O(\sigma n^{-1/2}\rho_0^{-1})$,
%then there exists a constant $C>0$ such that for any $z>0$,
%\ben
%\mathbb{P}(\|S_h f_h-Sf^*\|_n\ge \lam_n^{1/2}\rho_0z)\le 2e^{-Cz^2},\ \ 
%\mathbb{P}(\|f_h\|_{X}\ge \rho_0z)\le 2e^{-Cz^2}.
%\een
%\end{theorem}
%%%%%%%%%%%%%%%%%%%%%%%%%%%%%%%%%
%%%%%%%%%%%%%%%%%%%%%%%%%%%%%%%%%%%%
%%%%%%%%%%%%%%%%%%%%%%%%%%%%%%%%%%%

\section{The data-driven model reduction method}\label{sec:DataDrivenModelReduction}
In this section, we will use the POD method to accelerate the inverse problem computation. We first construct the POD basis functions from the snapshot solutions of the parabolic equation \eqref{zz0} with some known type of source function. Then, we solve the optimization problem in the inverse problems with the constructed POD basis. 

%To make this paper self-contained, we first give a brief review of the POD method, including the construction of the POD basis functions and POD-based Galerkin method for solving evolution problems. Then, we develop the data-driven mode reduction method in solving parabolic inverse problems based on the POD method. 
 
%The POD method has been used successfully to solve parabolic equations with fewer degrees of freedom.  Thus, we use the POD method to accelerate the inverse problem computation in this section.  

\subsection{Construction of the POD basis functions}\label{sec:PODmethod}
\noindent
Assume that $u \in H^1_0(\Omega)$ is the solution to the following weak formulation of the parabolic equation \eqref{zz0}
\begin{align}\label{eqn:weak_formulation}
(\partial_t u, \psi) + a(u, \psi)  = (f, \psi), \quad \forall \psi \in H^1_0(\Omega), \quad t \in [0, T],  
%u|_{t = 0} & = g,
\end{align}
where $\Omega \subset \mathbb{R}^d$ and $a(\cdot, \cdot)$ is a bilinear form on $H^1_0(\Omega) \times H^1_0(\Omega)$ that is defined according to the elliptic operator $\mathcal{L}$. Given a set of solutions at different time instances $\big\{ u(\cdot, t_0), u(\cdot, t_1), \ldots, u(\cdot, t_m) \big\}$ where $t_k = k \Delta t$ with $\Delta t = \frac{T}{m}$, we first get the solution snapshopts $\big\{ y_1, \ldots, y_{m + 1}, y_{m + 2}, \ldots, y_{2m + 1} \big\}$, where 
$y_k = u(\cdot, t_{k - 1})$, $k = 1, \ldots, m + 1$, and $y_k = \overline{\partial} u(\cdot, t_{k - m - 1})$, $k = m + 2, \ldots, 2m + 1$ with $\overline{\partial} u(\cdot, t_{k}) = \frac{u(\cdot, t_{k}) - u(\cdot, t_{k - 1})}{\Delta t}$, $k = 1, \ldots, m$.

Then, the POD basis functions $\{ \psi_k \}_{k = 1}^{{N_{pod}}}$ are built from the solution snapshopts by minimizing the following error: 
\begin{align}\label{POD_proj_err}
	\frac{1}{2m+1}\Big( \sum_{j = 0}^m \| u(t_j) - \sum_{k = 1}^{{N_{pod}}} (u(t_j), \psi_k)_{L^2(\Omega)} \psi_k \|_{L^2(\Omega)}^2 +	\sum_{j = 1}^m \| \overline{\partial} u(t_j) - \sum_{k = 1}^{{N_{pod}}} (\overline{\partial} u(t_j), \psi_k)_{L^2(\Omega)} \psi_k \|_{L^2(\Omega)}^2 \Big)
	%&\le \textcolor{black}{\sum_{k = {N_{pod}} + 1}^{2m + 1} \lambda_k} :=\rho({N_{pod}})\|f\|^2,
\end{align}
subject to the constraints that $\big(\psi_{k_1}(\cdot),\psi_{k_2}(\cdot)\big)_{L^2(\Omega)}=\delta_{k_1k_2}$, $1\leq k_1,k_2 \leq l$, where $\delta_{k_1k_2}=1$ if $k_1=k_2$, otherwise $\delta_{k_1k_2}=0$. 

Using the method of snapshot proposed by Sirovich \cite{Sirovich:1987}, we know that the optimization problem \eqref{POD_proj_err} can be reduced to an eigenvalue problem:
\begin{equation}\label{eigenvalueproblemPOD}
	Kv = \mu v,
\end{equation}
where the correlation matrix $K$ is computed from the solution snapshots $\{ y_1, y_2,\ldots, y_{2m + 1} \}$ with entries $K_{ij} = (y_i, y_j)_{L^2(\Omega)}$, $i,j = 1, \ldots, 2m + 1$, and is symmetric and semi-positive definite. 
%\textcolor{blue}{$K\in \mathbb{R}^{N_t\times N_t}$} is the correlation matrix with $(i,j)$-element $K_{ij}=\frac{1}{N_t}\big(u(\cdot,t_i),u(\cdot,t_j)\big)_{X}$ and is symmetric and semi-positive definite. 
%where $\lambda_1 \ge \lambda_2 \ge \ldots \ge \lambda_{2m + 1} \ge 0$ are the sorted eigenvalues of 
We sort the eigenvalues in a decreasing order as $\mu_1 \geq \mu_2 \geq ... \geq \mu_{2m + 1} > 0$ and
the corresponding eigenvectors are denoted by $v_k$, $k=1,...,2m + 1$. It can be shown that if the POD basis functions are constructed by
\begin{equation}\label{PODbasisMethodOfSnapshot}
	\varphi_{k}(\cdot) = \frac{1}{\sqrt{\mu_k}}\sum_{j=1}^{2m + 1}(v_k)_j u(\cdot,t_j), \quad 1\leq k \leq 2m + 1,
\end{equation}
where $(v_k)_j$ is the $ j$-th  component of the eigenvector $v_k$, they will minimize the error \eqref{POD_proj_err}. This result as well as the error formula were proved in \cite{holmes:98}.

\begin{prop}[Sec. 3.3.2, \cite{holmes:98} or p. 502, \cite{Willcox2015PODsurvey}]\label{Prop_PODError}
	Let $\mu_1 \geq \mu_2 \geq ... \geq \mu_{2m + 1} > 0$ denote the positive eigenvalues of $K$ in \eqref{eigenvalueproblemPOD}. Then $\{\psi_k\}_{k=1}^{N_{pod}}$ constructed according to \eqref{PODbasisMethodOfSnapshot}
	is the set of POD basis functions, and we have the following error formula:
	\begin{equation}\label{SnapshotOfSolutions}		
		\frac{\sum_{i=1}^{2m + 1}\left|\left|y_i - \sum_{k=1}^{{N_{pod}}}\big(y_i,\psi_k(\cdot)\big)_{L^2(\Omega}\psi_k(\cdot)\right|\right|_{L^2(\Omega)}^{2}}{		 \sum_{i=1}^{2m+1}\left|\left|y_i\right|\right|_{L^2(\Omega)}^{2}} 
		= \frac{\sum_{k={N_{pod}}+1}^{2m + 1}  \mu_k}{\sum_{k=1}^{2m + 1}  \mu_k},
	\end{equation} 
	where the number ${N_{pod}}$ is determined according to the ratio $\rho=\frac{\sum_{k={N_{pod}}+1}^{2m + 1}  \mu_k}{\sum_{k=1}^{2m + 1}\mu_k}$. 
\end{prop}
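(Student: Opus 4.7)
The plan is to treat this as the standard proof of the snapshot-form POD identity, which splits into three tasks: (i) verify that the functions $\psi_k$ defined by \eqref{PODbasisMethodOfSnapshot} form an $L^2$-orthonormal set; (ii) establish the closed-form value of the projection error for these particular basis functions; and (iii) argue that no other $L^2$-orthonormal system of size $N_{pod}$ can do strictly better, so that the computed error is indeed the minimum.

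For (i), I would compute $(\psi_{k_1},\psi_{k_2})_{L^2(\Omega)}$ directly: substituting the definition gives $\frac{1}{\sqrt{\mu_{k_1}\mu_{k_2}}}\sum_{i,j}(v_{k_1})_i(v_{k_2})_j (y_i,y_j)_{L^2(\Omega)} = \frac{1}{\sqrt{\mu_{k_1}\mu_{k_2}}} v_{k_1}^T K v_{k_2} = \frac{\mu_{k_2}}{\sqrt{\mu_{k_1}\mu_{k_2}}} v_{k_1}^T v_{k_2} = \delta_{k_1k_2}$, using $Kv_{k_2}=\mu_{k_2}v_{k_2}$ and the orthonormality of the eigenvectors of the symmetric matrix $K$. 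For (ii), since $\{\psi_k\}$ is orthonormal, each term in the numerator of \eqref{SnapshotOfSolutions} satisfies
\begin{equation*}
\Big\|y_i-\sum_{k=1}^{N_{pod}}(y_i,\psi_k)_{L^2(\Omega)}\psi_k\Big\|_{L^2(\Omega)}^2
=\|y_i\|_{L^2(\Omega)}^2-\sum_{k=1}^{N_{pod}}(y_i,\psi_k)_{L^2(\Omega)}^2 .
\end{equation*}
The key computation is
\begin{equation*}
(y_i,\psi_k)_{L^2(\Omega)}=\frac{1}{\sqrt{\mu_k}}\sum_{j=1}^{2m+1}(v_k)_j(y_i,y_j)_{L^2(\Omega)}
=\frac{1}{\sqrt{\mu_k}}(Kv_k)_i=\sqrt{\mu_k}\,(v_k)_i ,
\end{equation*}
so summing over $i$ and using $\|v_k\|_{\ell^2}=1$ gives $\sum_{i=1}^{2m+1}(y_i,\psi_k)_{L^2(\Omega)}^2=\mu_k$. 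Combined with $\sum_{i=1}^{2m+1}\|y_i\|_{L^2(\Omega)}^2=\mathrm{tr}(K)=\sum_{k=1}^{2m+1}\mu_k$, this yields the stated ratio immediately.

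For (iii), the optimality of the truncated expansion, my plan is to take any orthonormal system $\{\tilde\psi_k\}_{k=1}^{N_{pod}}\subset\mathrm{span}\{y_1,\ldots,y_{2m+1}\}$ and expand each $\tilde\psi_k=\sum_j c_{kj}y_j$. One then shows that the total projection energy $\sum_{i=1}^{2m+1}\sum_{k=1}^{N_{pod}}(y_i,\tilde\psi_k)_{L^2(\Omega)}^2$ equals $\mathrm{tr}(C^T K^2 C)$ subject to $C^T K C = I_{N_{pod}}$, which after a change of variable $w_k=K^{1/2}c_k$ becomes the problem of maximizing $\sum_k w_k^T K w_k$ over orthonormal $\{w_k\}$. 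The Rayleigh–Ritz / Ky Fan trace principle then forces the maximum to equal $\sum_{k=1}^{N_{pod}}\mu_k$, achieved by the top eigenvectors $v_1,\ldots,v_{N_{pod}}$, which corresponds exactly to the $\psi_k$ in \eqref{PODbasisMethodOfSnapshot}. Minimizing the projection error over bases outside $\mathrm{span}\{y_i\}$ adds nothing, since components orthogonal to the snapshot span are wasted.

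The main obstacle is not the identity \eqref{SnapshotOfSolutions} itself, which is a direct computation once (i) and the formula $(y_i,\psi_k)_{L^2(\Omega)}=\sqrt{\mu_k}(v_k)_i$ are in hand; rather it is the optimality argument in (iii), where one must be careful about the constraint $C^T K C = I$ (because the $y_i$ are not assumed linearly independent, $K$ may be singular and the change of variables $w_k=K^{1/2}c_k$ must be interpreted on the range of $K$). Since the statement is credited to \cite{holmes:98,Willcox2015PODsurvey}, I would at this step cite the Ky Fan / Eckart–Young–Mirsky theorem rather than reprove it, and note that restricting the search to $\mathrm{span}\{y_1,\ldots,y_{2m+1}\}$ is without loss of generality.
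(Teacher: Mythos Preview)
Your argument is correct and is precisely the standard snapshot-POD proof: orthonormality of the $\psi_k$ via $v_{k_1}^T K v_{k_2}$, the identity $(y_i,\psi_k)_{L^2(\Omega)}=\sqrt{\mu_k}\,(v_k)_i$ yielding $\sum_i(y_i,\psi_k)^2=\mu_k$ and $\sum_i\|y_i\|^2=\mathrm{tr}(K)$, and optimality via the Ky Fan trace inequality. There is nothing to compare against, however, because the paper does not supply its own proof of this proposition: it is stated with attribution to \cite{holmes:98} and \cite{Willcox2015PODsurvey} (see the sentence immediately preceding the proposition, ``This result as well as the error formula were proved in \cite{holmes:98}''), and no argument is given in the paper itself. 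Your write-up therefore goes beyond what the paper does, and does so correctly; the only point worth tightening is the one you already flag, namely that when $K$ is singular the substitution $w_k=K^{1/2}c_k$ should be read on $\mathrm{ran}(K)$, which is harmless since the snapshot span is exactly that range.
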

In practice, we shall make use of the decay property of eigenvalues in $\mu_k$ and choose the first $m$ dominant eigenvalues such that the ratio $\rho$ is small enough to achieve an expected accuracy, for instance $\rho=1\%$.  One would prefer the eigenvalues decays as fast as possible so that one can ensure high accuracy with few POD basis functions.

\subsection{A Fast algorithm for solving parabolic inverse source problems}

%Let $\Psi = \text{span}\{ \psi_1, \ldots, \psi \}$ denote the finite-dimensional space spanned by the POD basis functions. The semi-discrete scheme of the POD-based Galerkin method for the parabolic equation \eqref{zz0} seeks a solution $\tilde{u}(t)$ such that $\tilde{u}(t) \in \Psi$  and 
%\begin{align}
%(\partial_t \tilde{u}, \psi) + a(\tilde{u}, \psi)   = (f, \psi), \quad \forall \psi \in \Psi, \quad t \in [0, T], \label{semi-discrete scheme of POD-Galerkin}
%\tilde{u}|_{t = 0} & = \tilde{g},
%\end{align}
%\end{equation}
%where the initial value of $\tilde{u}(t)$ is the project of the initial value $g(x)$ in $\Psi$. The fully discrete scheme of the POD-based Galerkin method can be obtained by combining \eqref{semi-discrete scheme of POD-Galerkin} with a finite difference scheme in temporal discretization, e.g. the Backward Euler scheme or the Crank-Nicolson scheme. 

%\textcolor{red}{In later derivation, we denote one using Backward Euler temporal discretization to be $\mathcal{S}_{pod}$}. 

We now propose the fast algorithm for solving parabolic inverse source problems based on some given discretization, including our proposed POD method.  We first define the functional $\mathcal{J}$ to be 
\begin{align}
	\mathcal{J}[f]=\|\mathcal{S}f-m\|^2_n+\lambda_n \|f\|_{L^2(\Omega)}^2.
\end{align}	 
Then, the least-squares regularized minimization problem \eqref{para-examp1} becomes to solve the following misfit functional:
\begin{align}
	\mathop {\rm min}\limits_{f\in L^2(\Omega)}  \mathcal{J}[f].
\end{align}
\begin{lemma}\label{Fdifferentiable}
	The misfit functional $\mathcal{J}[f]$ is Fr$\acute{e}$chet-differentiable. 
\end{lemma}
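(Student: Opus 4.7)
The plan is to verify Fréchet differentiability directly from the definition, by expanding $\mathcal{J}[f+h]-\mathcal{J}[f]$, isolating the linear-in-$h$ part, and controlling the remainder. The key structural observation is that $\mathcal{S}:L^2(\Omega)\to H^2(\Omega)$ is a bounded \emph{linear} operator (as stated earlier in the excerpt, with $\|\mathcal{S}f\|_{H^2(\Om)}\le C\|f\|_{L^2(\Om)}$), and that $H^2(\Omega)\hookrightarrow C(\bar\Om)$, so that evaluation at the sensor points $\{x_i\}$ is continuous and the empirical inner product $(\cdot,\cdot)_n$ makes sense on $\mathcal{S}h$.

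First I would fix $f\in L^2(\Om)$ and, for $h\in L^2(\Om)$, use the linearity of $\mathcal{S}$ and the bilinearity of $(\cdot,\cdot)_n$ and $(\cdot,\cdot)_{L^2(\Om)}$ to expand
\begin{align*}
\mathcal{J}[f+h]-\mathcal{J}[f] &= 2(\mathcal{S}f-m,\mathcal{S}h)_n + \|\mathcal{S}h\|_n^2 + 2\lam_n(f,h)_{L^2(\Om)} + \lam_n\|h\|_{L^2(\Om)}^2.
\end{align*}
This identifies the candidate Fréchet derivative as the linear map
\begin{equation*}
D\mathcal{J}[f](h) := 2(\mathcal{S}f-m,\mathcal{S}h)_n + 2\lam_n(f,h)_{L^2(\Om)},
\end{equation*}
and the remainder as $R(h)=\|\mathcal{S}h\|_n^2+\lam_n\|h\|_{L^2(\Om)}^2$.

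Next I would check that $D\mathcal{J}[f]$ is a bounded linear functional on $L^2(\Om)$. Using Cauchy--Schwarz in the empirical inner product, $\|\mathcal{S}h\|_n\le \|\mathcal{S}h\|_{C(\bar\Om)}$, the Sobolev embedding, and the bound $\|\mathcal{S}h\|_{H^2}\le C\|h\|_{L^2(\Om)}$, one obtains $|(\mathcal{S}f-m,\mathcal{S}h)_n|\le C\|\mathcal{S}f-m\|_n\|h\|_{L^2(\Om)}$, and the $\lam_n(f,h)_{L^2(\Om)}$ term is handled by ordinary Cauchy--Schwarz. This gives $|D\mathcal{J}[f](h)|\le C(f,m,\lam_n)\|h\|_{L^2(\Om)}$.

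Finally I would estimate the remainder by the same chain of inequalities: $\|\mathcal{S}h\|_n^2\le C^2\|h\|_{L^2(\Om)}^2$, hence $|R(h)|\le (C^2+\lam_n)\|h\|_{L^2(\Om)}^2$, so that $|R(h)|/\|h\|_{L^2(\Om)}\to 0$ as $\|h\|_{L^2(\Om)}\to 0$. This yields Fréchet differentiability with derivative $D\mathcal{J}[f]$. There is no real obstacle here; the only point that deserves care is making sure the pointwise evaluations inside $(\cdot,\cdot)_n$ are legitimate for arbitrary $h\in L^2(\Om)$, which is exactly where the $L^2\!\to\!H^2\hookrightarrow C(\bar\Om)$ regularization by $\mathcal{S}$ is used.
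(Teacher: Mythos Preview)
Your proof is correct and follows essentially the same idea as the paper: both exploit that $\mathcal{J}$ is a quadratic functional in $f$ (via the bounded linear operator $\mathcal{S}$), so the increment splits cleanly into a linear part and a quadratic remainder. The paper's argument is actually terser---it computes only the directional (G\^ateaux) derivative $\lim_{t\to 0}\frac{\mathcal{J}[f+tv]-\mathcal{J}[f]}{t}$ and then rewrites it via the adjoint as $d\mathcal{J}[f]=\mathcal{S}^*(\mathcal{S}f-m)+\lambda_n f$---whereas you go further in explicitly checking boundedness of the linear part and the $o(\|h\|)$ decay of the remainder, which is the more complete verification of Fr\'echet differentiability; the paper, on the other hand, records the Riesz representative (gradient) that is used in the subsequent gradient-descent iteration.
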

\begin{proof} 
	From the definition of Fr$\acute{e}$chet-differentiable, we need to compute
	\begin{align}\label{Frechet-differentiable}
		d\mathcal{J}[f](v)&=\lim_{t\rightarrow 0} \frac{\mathcal{J}[f+tv]-\mathcal{J}[f]}{t}\nonumber\\
		&= (\mathcal{S}f-m,\mathcal{S}v)_n + \lambda_n(f,v)\nonumber\\
		&= \big(\mathcal{S}^*(\mathcal{S}f-m),v\big) + \lambda_n(f,v)\nonumber\\
		&= \big(\mathcal{S}^*(\mathcal{S}f-m)+ \lambda_n f,v\big), 
	\end{align}
	where $v\in L^2(\Omega)$. In \eqref{Frechet-differentiable}, the second equality is easily to get from the quadratic form of the misfit functional $\mathcal{J}[f]$ and $\mathcal{S}^*$ is the adjoint operator of $\mathcal{S}$ in the third equality. Thus, one can directly obtain that 
	\begin{align}
		d\mathcal{J}[f]=\mathcal{S}^*(\mathcal{S}f-m)+ \lambda_n f. \label{dJf}
	\end{align}
\end{proof}
The formula \eqref{dJf} in Lemma \ref{Fdifferentiable} allows us to apply the  gradient descent method to minimize
the discrepancy functional $\mathcal{J}[f]$. Let $f_{0}$ be the initial guess and $f_k$ denote the the solution of the least-squares regularized minimization problem \eqref{para-examp1} at the $k$-th iteration step, we update the iterative solution by 
\begin{align}
	f_{k+1}=f_{k}-\alpha d\mathcal{J}[f_{k}], \quad \forall k\in \mathbb{N},
\end{align}
where $\alpha$ is the step size.

%If we use backward Euler scheme for the temporal discretization in \eqref{semi-discrete scheme of POD-Galerkin} and let $\tilde{u}(t)$ be the numerical solution obtained by the POD-based Galerkin method, 
Given a fully discrete scheme with solution space $V_{dis}$, we define an approximate forward operator $\mathcal{S}_{dis}$: $L^2(\Omega)\rightarrow V_{dis}$, such that, given any source function $f$, $\mathcal{S}_{dis}f$ gives the numerical solution at final time. In such discrete setting,  we turn to solve the following misfit functional:
\begin{align}
	\mathop {\rm min}\limits_{f\in \Psi}  \mathcal{J}_{dis}[f],
\end{align}
where the functional $\mathcal{J}_{dis}[f]=\|\mathcal{S}_{dis}f-m\|^2_n+\lambda_n \|f\|_{L^2(\Omega)}^2$. We can compute the 
Fr$\acute{e}$chet derivative of $\mathcal{J}_{dis}[f]$ and obtain the following iterative scheme:
\begin{align}\label{eqn:pod_iteration}
	f_{k+1}=f_{k}-\alpha d\mathcal{J}_{dis}[f_{k}], \quad \forall k\in \mathbb{N},
\end{align}
where $\alpha$ is the step size, $d\mathcal{J}_{dis}[f]=\mathcal{S}_{dis}^*(\mathcal{S}_{dis}f-m)+ \lambda_n f$, and $f_{0}$
is some initial guess. 

The iterative scheme \eqref{eqn:pod_iteration} works with both FEM-based Galerkin method and POD-based Galerkin method. However, we emphasize that the DOF of the POD basis functions is much smaller than that of the FEM space. Thus, the POD-based Galerkin method can provide significant computational savings in solving the parabolic equation \eqref{zz0} than the FEM, which allows us to quickly compute the iterative scheme \eqref{eqn:pod_iteration}. Therefore, we obtain a fast algorithm in solving parabolic inverse source problem.  

\subsection{Complete algorithm}\label{sec:complete_algorthm}
\noindent  Due to the nature of the inverse source problem, the source term is unknown. We assume it to be a sample from some random space. Such random space may not have a closed form or 
a finite-dimensional parameterization, so we further 
assume that we have $N_f$ realizations, $\{f_l\}_{l=1,\cdots, N_f}$ of the source term. In other words, $\{f_l\}_{l=1,\cdots, N_f}$ are some possible ground truth source function in the inverse problem. Hence, deviating from the classic POD algorithm in solving parabolic equation introduced in Section \ref{sec:PODmethod}, we include snapshots from each proposed source function into the minimization \eqref{POD_proj_err}. Estimation in \eqref{SnapshotOfSolutions} still holds as it is only an algebraic result. 

Finally, we summarize the proposed data-driven model reduction method for solving parabolic inverse source problems in Algorithm \ref{alg:InversePOD}, where the notations have been defined before.  
\begin{algorithm}[h]
	\SetAlgoLined
	\textbf{Input}: Observation data $m$. Proposed source function $\{f_l\}_{l=1,\cdots, N_f}$, error thresholds for constructing POD basis functions $\epsilon>0$ and for the optimization problem $tol$, computational time $T$, time step of the FEM $\Delta t$, $N_T=\lceil T/\Delta t\rceil$, mesh size of the FEM $h$, one step finite element solver for the forward problem $\mathcal{O}_{FEM}$, where  $\mathcal{S}_{FEM}f=(\mathcal{O}_{FEM}(f))^{N_T}u_0$, and step size in the gradient descent method  $\alpha$.\\
	\For{$l=1:N_f$}{
		Solve the forward problem and store the snapshots $(\mathcal{O}_{FEM}(f_l))^{i}u_0$ for $i=1,\cdots, N_T$.\\
	}
	{Concatenate} all snapshots as $\{S_j\}_{j=1,\cdots, N_S=N_T\times N_f}=\cup_{i=1,\cdots, N_T, l=1,\cdots, N_f} \{(\mathcal{O}_{FEM}(f_l))^{i}u_0\}$;\\
	compute covariance matrix $K=(K_{ij})$ where $K_{ij}=\frac{1}{N_S} (S_i, S_j)_{L^2(\Omega)}$;\\
	compute the SVD of $K=(K_{ij})$ and denote the eigenvalues in a decreasing order as $\mu_1 \geq \mu_2 \geq ... \geq \mu_{N_S} > 0$ and the corresponding eigenvectors are denoted by $v_k$;\\
	find minimal $N_{pod}$ such that $\frac{\sum_{k=N_{pod}+1}^{N_S}  \mu_k}{\sum_{k=1}^{N_S}  \mu_k}<\epsilon$;\\
	construct POD basis by $\varphi_{k}(\cdot) = \frac{1}{\sqrt{\mu_k}}\sum_{j=1}^{N_S}(v_k)_j S_j, \quad 1\leq k \leq N_{pod}$;\\
	construct Galerkin type solver for forward problem on POD basis $\{\varphi_{k}\}_{k=1,\cdots, N_{pod}}$ as $S_{POD}$;\\
	Set $f$ as an initial guess $f=f_0$.\\
	\While{$\mathcal{J}_{pod}[f_k]=\|\mathcal{S}_{pod}f_k-m\|^2_n+\lambda_n \|f_k\|_{L^2(\Omega)}^2>tol$ }{
		update $f_{k+1}\leftarrow f_k-\alpha d\mathcal{J}_{pod}[f_{k}]$ where  $d\mathcal{J}_{pod}[f_k]=\mathcal{S}_{pod}^*(\mathcal{S}_{pod}f_k-m)+ \lambda_n f_k$.
	}
	\textbf{Output}: computed source term $f$.
	\caption{\textbf{A fast algorithm for solving parabolic inverse source problem}}
	\label{alg:InversePOD}
\end{algorithm}

\section{Convergence analysis}\label{sec:ConvergenceAnalysis}
In this section, we will first present the general discrete approximation to the optimization problem \eqref{para-examp1}. 
Recall that $V_{dis}\subset X$ be the discrete function space with dimension $N_{dis}$ and $\mathcal{S}_{dis}$ be the discrete approximation of the operator $\mathcal{S}:L^2(\Omega)\to H^2(\Omega)$. We make the following assumptions on the discrete function space $V_{dis}$ and the discretized operator $\mathcal{S}_{dis}$.

\begin{assu}\label{Assumption3}
	(1) There exists an error estimate term $e_{dis}$ such that the discrete operator $S_{dis}$ satisfies 
\begin{align}
	\|\mathcal{S}f-\mathcal{S}_{dis}f\|^2_n\leq Ce_{dis}\|f\|^2_{L^2(\Omega)}.
\end{align}
	(2) For any $f\in L^2(\Omega)$, there exists $v\in V_{dis}$ such that,
	\begin{align}
	\lambda_n \|f-v\|^2_{L^2(\Omega)} + \|\mathcal{S}_{dis}(f-v)\|^2_n\leq C(\lambda_n+e_{dis})\|f\|^2_{L^2(\Omega)}.
\end{align}
\end{assu}

{In the general discrete approximation  of the problem \eqref{para-examp1}}, we will solve the least-squares regularized minimization problem:
\begin{align}\label{disc}
\mathop {\rm min}\limits_{f_{dis}\in V_{dis}}         \|\mathcal{S}_{dis}f_{dis}-m\|^2_n+\lambda_n \|f_{dis}\|_{L^2(\Omega)}^2.
\end{align}
%Obviously $u_h$ 
%satisfies the following weak formulation
%\beq\label{d1}
%\lambda_n (f_h,v)_X+(S_hf_h,S_hv_h)_n =(m,S_hv_h)_n\ \ \ \ \forall v\in V_h,
%\eeq
The following proposition gives the convergence analysis for the discretized optimization schemes in solving the parabolic inverse source problems, which is proved in \cite{Chen-Zhang2021}.

\begin{prop}[Theorem 3.10 in \cite{Chen-Zhang2021}]\label{thm:4.1}
	Let $\{e_i\}^n_{i=1}$ are independent random variables satisfying $\mathbb{E}[e_i]=0$ and $\mathbb{E}[e^2_i]\leq \sigma^2$ for $i=1,\cdots, n$ and $f_{dis}$ be the solution of \eqref{disc}.
	Then there exist constants $\lambda_0 > 0$ and $C>0$, such that for any $\lambda_n \leq \lambda_0$  the following estimates hold true:
	\begin{align}\label{d2}
	\mathbb{E}\big[\|\mathcal{S}f^*- \mathcal{S}_{dis}f_{dis}\|_n^2\big]\le C(\lambda_n+e_{dis})\|f^*\|^2_{L^2(\Omega)}+C\left(1+\frac{e_{dis}}{\lam_n}+\frac{N_{dis} e_{dis}}{\lambda^{1-d/4}_n}\right)\frac{\sigma^2}{n\lambda_n^{d/4}},
\end{align}
	\begin{align}
	\mathbb{E}\big[\|f^*- f_{dis}\|_{L^2(\Omega)}^2\big]\le C\frac{\lambda_n+e_{dis}}{\lambda_n}\|f^*\|^2_{L^2(\Omega)}+C\left(1+\frac{e_{dis}}{\lam_n}+\frac{N_{dis} e_{dis}}{\lambda^{1-d/4}_n}\right)\frac{\sigma^2}{n\lambda_n^{1+d/4}},
\end{align}
and
	\begin{align}
	\mathbb{E}\big[\|f^*- f_{dis}\|_{H^{-1}(\Omega)}^2\big]&\le C(\lambda^{1/2}_n+e^{1/2}_{dis})\frac{\lambda_n+e_{dis}}{\lambda_n}\|f^*\|^2_{L^2(\Omega)}
	\nonumber\\
	&+C(\lambda^{1/2}_n+e^{1/2}_{dis})\left(1+\frac{e_{dis}}{\lam_n}+\frac{N_{dis} e_{dis}}{\lambda^{1-d/4}_n}\right)\frac{\sigma^2}{n\lambda_n^{1+d/4}}.
\end{align}
\end{prop}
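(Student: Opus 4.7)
The plan is to mimic the proof strategy used for Proposition 2.1 (the continuous case), but to insert the discretization error carefully via Assumption 4.1. As a starting point, I would pick the competitor $v\in V_{dis}$ guaranteed by part (2) of Assumption \ref{Assumption3} with $f=f^*$ and use the optimality of $f_{dis}$ to write
\begin{align*}
\|\mathcal{S}_{dis}f_{dis}-m\|_n^2+\lam_n\|f_{dis}\|_{L^2(\Om)}^2
\le \|\mathcal{S}_{dis}v-m\|_n^2+\lam_n\|v\|_{L^2(\Om)}^2.
\end{align*}
Substituting $m_i=\mathcal{S}f^*(x_i)+e_i$, expanding the two squares, and cancelling $\|e\|_n^2$, I obtain the basic inequality
\begin{align*}
\|\mathcal{S}_{dis}f_{dis}-\mathcal{S}f^*\|_n^2+\lam_n\|f_{dis}\|_{L^2(\Om)}^2
&\le \|\mathcal{S}_{dis}v-\mathcal{S}f^*\|_n^2+\lam_n\|v\|_{L^2(\Om)}^2\\
&\quad + 2\bigl(e,\mathcal{S}_{dis}(f_{dis}-v)\bigr)_n.
\end{align*}

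The first two terms on the right, together with the triangle inequality $\|\mathcal{S}_{dis}v-\mathcal{S}f^*\|_n\le\|\mathcal{S}_{dis}(v-f^*)\|_n+\|\mathcal{S}_{dis}f^*-\mathcal{S}f^*\|_n$, form the deterministic bias contribution, and they are directly controlled by $C(\lam_n+e_{dis})\|f^*\|_{L^2(\Om)}^2$ via the two parts of Assumption \ref{Assumption3}. This already accounts for the first terms in all three estimates of the proposition (for the $H^{-1}$ estimate one then multiplies by the interpolation factor $\lam_n^{1/2}+e_{dis}^{1/2}$ at the end, see below).

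The main obstacle, and the part that makes the discretization correction $(1+\frac{e_{dis}}{\lam_n}+\frac{N_{dis}e_{dis}}{\lam_n^{1-d/4}})$ appear, is the stochastic cross term $(e,\mathcal{S}_{dis}(f_{dis}-v))_n$. I would bound it by an empirical process argument on the finite-dimensional image $\mathcal{S}_{dis}(V_{dis})$, peeling $\mathcal{S}_{dis}(f_{dis}-v)$ into the part that lives in the range of $\mathcal{S}$ (handled exactly as in the proof of Proposition \ref{para-examthm:2.1}, giving the $\sigma^2/(n\lam_n^{d/4})$ scale via the metric entropy bound for $H^2\hookrightarrow C(\bar\Om)$) plus a residual $(\mathcal{S}_{dis}-\mathcal{S})(f_{dis}-v)$ whose empirical norm is controlled by Assumption \ref{Assumption3}(1). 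A Cauchy--Schwarz / Young-type split allows the $\|\mathcal{S}_{dis}(f_{dis}-v)\|_n^2$ part to be absorbed into the left hand side, while the $\lam_n\|f_{dis}-v\|_{L^2(\Om)}^2$ contribution absorbs into $\lam_n\|f_{dis}\|_{L^2(\Om)}^2$; the finite-dimensionality of $V_{dis}$ enters through an inverse estimate (which swaps the $L^2$ norm for the empirical norm on $V_{dis}$ at a cost of $N_{dis}$), producing the $N_{dis}e_{dis}/\lam_n^{1-d/4}$ factor. Taking expectations throughout yields the first estimate \eqref{d2}.

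For the $L^2$ estimate, I would return to the inequality and use
$\lam_n\|f_{dis}-f^*\|_{L^2(\Om)}^2\le 2\lam_n\|f_{dis}-v\|_{L^2(\Om)}^2+2\lam_n\|v-f^*\|_{L^2(\Om)}^2$,
invoking Assumption \ref{Assumption3}(2) for the second piece and the previously absorbed quantity for the first, after dividing by $\lam_n$; this is exactly what gives the $1/\lam_n$ multiplier and produces the stated rate $\sigma^2/(n\lam_n^{1+d/4})$. Finally, the $H^{-1}$ estimate follows by the interpolation inequality $\|\phi\|_{H^{-1}(\Om)}^2\le C\|\phi\|_{L^2(\Om)}\|\phi\|_{H^{-2}(\Om)}$ combined with the observation that $\|f^*-f_{dis}\|_{H^{-2}(\Om)}$ is, up to constants, controlled by $\|\mathcal{S}f^*-\mathcal{S}_{dis}f_{dis}\|_n$ plus the discretization error (because $\mathcal{S}$ is essentially the inverse of a second-order operator at time $T$, so it gains two derivatives). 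Plugging the first two estimates in then yields the stated bound with the extra multiplicative factor $\lam_n^{1/2}+e_{dis}^{1/2}$. I expect the absorption/peeling step for the noise term to be the most delicate part, since one has to simultaneously keep track of the unknown $f_{dis}$, the competitor $v$, the finite-dimensionality $N_{dis}$, and the sampling error on the point set $\{x_i\}$.
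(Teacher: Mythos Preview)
The paper does not prove this proposition at all: it is stated as Theorem~3.10 of \cite{Chen-Zhang2021} and the text immediately preceding it says explicitly that the result ``is proved in \cite{Chen-Zhang2021}.'' There is therefore no proof in the present paper to compare your attempt against; the authors simply quote the statement and then apply it (in the proof of Theorem~\ref{thm_convergence}) by verifying Assumption~\ref{Assumption3} for the POD scheme and reading off the conclusion.

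That said, your sketch is a reasonable outline of the kind of argument one expects in \cite{Chen-Zhang2021}: starting from the optimality of $f_{dis}$ against the competitor $v$ furnished by Assumption~\ref{Assumption3}(2), splitting into a deterministic bias controlled by $(\lam_n+e_{dis})\|f^*\|_{L^2(\Om)}^2$ and a stochastic cross term, and then handling the latter by an empirical-process bound on the $H^2$-image of $\mathcal{S}$ plus a correction for the discrepancy $\mathcal{S}-\mathcal{S}_{dis}$. The one place where your account is thin is the precise mechanism by which the factor $N_{dis}\,e_{dis}/\lam_n^{1-d/4}$ emerges: you attribute it to an ``inverse estimate'' swapping $L^2$ for empirical norms on $V_{dis}$, but in the actual argument the $N_{dis}$ typically enters through a bound on the noise projected onto the $N_{dis}$-dimensional discrete range $\mathcal{S}_{dis}(V_{dis})$ (i.e.\ $\E\|P_{V_{dis}}e\|_n^2\le N_{dis}\sigma^2/n$), combined with Assumption~\ref{Assumption3}(1) to pass between $\mathcal{S}$ and $\mathcal{S}_{dis}$. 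If you intend to reconstruct the full proof, that step deserves to be written out carefully, since it is where all three parameters $N_{dis}$, $e_{dis}$, and $\lam_n$ interact. Your interpolation route for the $H^{-1}$ bound is the standard one and matches what one would expect.
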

Now we study the error analysis of the proposed method in solving the parabolic inverse source problem, where we used the backward Euler scheme to discretize the temporal derivative. Let $u$ be the solution  to \eqref{eqn:weak_formulation} and $\hat{u}$ is the solution computed by semi-discrete scheme based on FEM basis functions. {We also assume the POD basis functions are constructed by snapshots of $\hat{u}$ and their temporal finite differences}. The FEM space and POD space are denoted by $V_h$ and $\Psi$, respectively. The fully discrete scheme is constructed on $\Psi$ and the solution is denoted by $U_k$ for $k=1\cdots m$ with $ m=\frac{T}{\Delta t}$. To be precise, we seek $U_k$ such that 
{ \begin{equation}\label{eqn:fully_discrete_POD}
 	(\bar{\partial}U_k,\psi)+a(U_k,\psi)=(f,\psi), \quad \forall \psi\in \Psi.
 \end{equation}
 }
Now we denote the solution operator from source term $f$ to the final time solution $U_m$ as $\mathcal{S}_{pod}$, such that $\mathcal{S}_{pod}f=U_m$.
\begin{theorem}\label{thm_convergence}
	Let $f_{pod}$ be the solution of \eqref{disc}, then there exist constants $\lambda_0 > 0$ and $C>0$, such that for any $\lambda_n \leq \lambda_0$ the following estimates hold true:
	\begin{align}\label{pod1}
		\mathbb{E}\big[\|\mathcal{S}f^*- \mathcal{S}_{pod}f_{pod}\|_n^2\big]\le C(\lambda_n+e_{pod})\|f^*\|^2_{L^2(\Omega)}+C\left(1+\frac{e_{pod}}{\lam_n}+\frac{{N_{pod}} e_{pod}}{\lambda^{1-d/4}_n}\right)\frac{\sigma^2}{n\lambda_n^{d/4}},
	\end{align}
	\begin{align}
		\mathbb{E}\big[\|f^*- f_{pod}\|_{L^2(\Omega)}^2\big]\le C\frac{\lambda_n+e_{pod}}{\lambda_n}\|f^*\|^2_{L^2(\Omega)}+C\left(1+\frac{e_{pod}}{\lam_n}+\frac{{N_{pod}} e_{pod}}{\lambda^{1-d/4}_n}\right)\frac{\sigma^2}{n\lambda_n^{1+d/4}},
	\end{align}
	and
	\begin{align}
	\mathbb{E}\big[\|f^*- f_{pod}\|_{H^{-1}(\Omega)}^2\big]&\le C(\lambda^{1/2}_n+e^{1/2}_{pod})\frac{\lambda_n+e_{pod}}{\lambda_n}\|f^*\|^2_{L^2(\Omega)} \nonumber \\
	&+C(\lambda^{1/2}_n+e^{1/2}_{pod})\left(1+\frac{e_{pod}}{\lam_n}+\frac{{N_{pod}} e_{pod}}{\lambda^{1-d/4}_n}\right)\frac{\sigma^2}{n\lambda_n^{1+d/4}},
	\end{align}
	where 
	\begin{equation}
	    e_{pod}=\left(h^2+\Delta t|\ln(\Delta t)|+\sqrt{\frac{T}{\Delta t}\rho}\right)^2.
	\end{equation}
\end{theorem}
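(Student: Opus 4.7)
The plan is to reduce Theorem \ref{thm_convergence} to Proposition \ref{thm:4.1} by verifying both conditions of Assumption \ref{Assumption3} for the POD discretization, identifying $V_{dis}=\Psi$, $\mathcal{S}_{dis}=\mathcal{S}_{pod}$, $N_{dis}=N_{pod}$, and the error constant $e_{dis}=e_{pod}=(h^2+\Delta t|\ln\Delta t|+\sqrt{T\rho/\Delta t})^2$. Once the two conditions are established, the three estimates of the theorem follow immediately from Proposition \ref{thm:4.1}.

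The bulk of the work is the forward error bound in Assumption \ref{Assumption3}(1), namely $\|\mathcal{S}f-\mathcal{S}_{pod}f\|_n^2\leq C e_{pod}\|f\|_{L^2}^2$. I would split the error through two intermediate objects,
$$\mathcal{S}f-\mathcal{S}_{pod}f = \big(u(T)-u_h(T)\big) + \big(u_h(T)-u_h^m\big) + \big(u_h^m-U_m\big),$$
where $u_h$ is the FEM semi-discrete (in space) solution and $u_h^m$ is the FEM fully discrete solution with backward Euler of step $\Delta t$. Because the sensors are quasi-uniform by \eqref{aa} and $H^2(\Omega)\hookrightarrow C(\bar\Omega)$, the empirical norm $\|\cdot\|_n$ is controlled by $\|\cdot\|_{L^\infty}$, so the standard FEM bound $\|u(T)-u_h(T)\|_{L^\infty}\leq C h^2\|f\|_{L^2}$ handles the first piece. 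The second piece requires a backward-Euler analysis \emph{without} the classical assumption $u_{tt}\in L^2$; using instead the parabolic smoothing $\|u_t(t)\|_{L^2}\lesssim t^{-1/2}\|f\|_{L^2}$ and integrating in time produces the refined $\Delta t|\ln\Delta t|$ factor in place of the usual $\Delta t$. The third, genuinely new, piece is the POD contribution: inserting $\pm P_\Psi u_h^m$ and testing \eqref{eqn:fully_discrete_POD} against the POD component of the error, a discrete energy and Gronwall argument reduces the POD error to the snapshot projection error, which by Proposition \ref{Prop_PODError} satisfies $\sum_{i=1}^{2m+1}\|y_i-P_\Psi y_i\|_{L^2}^2 \leq \rho\sum_k\mu_k$; combined with the derivative snapshots and a bound summed across $m=T/\Delta t$ time steps, this yields the $\sqrt{T\rho/\Delta t}$ contribution. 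Combining the three pieces and squaring gives exactly $e_{pod}$.

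Assumption \ref{Assumption3}(2) is then settled by the convenient choice $v=P_\Psi f$, the $L^2$-orthogonal projection of $f$ onto the POD space $\Psi$. One has $\|f-v\|_{L^2}\leq \|f\|_{L^2}$, and because the scheme \eqref{eqn:fully_discrete_POD} with source $f-v$ and zero initial data sees the source only through $(f-v,\psi)=0$ for every $\psi\in\Psi$, the corresponding POD iterates vanish identically; hence $\mathcal{S}_{pod}(f-v)\equiv 0$ and the inequality holds with constant $C=1$. Plugging both verified conditions into Proposition \ref{thm:4.1} then produces the three estimates of Theorem \ref{thm_convergence}. The principal obstacle throughout is the weak regularity $f\in L^2(\Omega)$: it rules out the classical Kunisch--Volkwein POD analysis (which requires $u_{tt}\in L^2$) and the standard $O(\Delta t)$ backward-Euler bound, so one has to replace these by parabolic smoothing arguments while keeping all constants independent of $\lambda_n$ so that the error structure of Proposition \ref{thm:4.1} is preserved when the bookkeeping is carried out.
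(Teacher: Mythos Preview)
Your high-level plan matches the paper's: establish the forward bound $\|\mathcal{S}f-\mathcal{S}_{pod}f\|_n^2\le Ce_{pod}\|f\|_{L^2}^2$ and invoke Proposition~\ref{thm:4.1}. The decomposition, however, is different. The paper does \emph{not} route through a fully discrete FEM solution $u_h^m$; it splits only
\[
U_m-\hat u(t_m)=\underbrace{(U_m-\mathcal P^{N_{pod}}\hat u(t_m))}_{\vartheta_m}+\underbrace{(\mathcal P^{N_{pod}}\hat u(t_m)-\hat u(t_m))}_{\varrho_m},
\]
with $\mathcal P^{N_{pod}}$ the \emph{Ritz} (not $L^2$) projection onto $\Psi$, and then adds the semi-discrete FEM error $\hat u(t_m)-u(t_m)$. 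The time-discretization and POD contributions are handled \emph{together} inside $\vartheta_m$: testing the error equation yields a recursion driven by $v_k=w_k+z_k$, where $w_k=\hat u_t(t_k)-\bar\partial\hat u(t_k)$ produces the $\Delta t|\ln\Delta t|$ term and $z_k=\bar\partial\hat u(t_k)-\mathcal P^{N_{pod}}\bar\partial\hat u(t_k)$ produces the $\sqrt\rho$ term. Your four-term split can be made to work, but bear in mind that the snapshot formula \eqref{SnapshotOfSolutions} controls the projection error of the \emph{semi}-discrete snapshots $\hat u(t_k),\bar\partial\hat u(t_k)$, not of the fully discrete $u_h^k$; bounding $\|(I-P_\Psi)u_h^k\|$ therefore costs an extra triangle inequality through $\hat u(t_k)$, which the paper's decomposition avoids.

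There is a technical slip in the smoothing estimate you invoke. The bound $\|u_t(t)\|_{L^2}\lesssim t^{-1/2}\|f\|_{L^2}$ is not the relevant one (indeed $\|u_t(t)\|\le C\|f\|$ uniformly when $g=0$, since $u_t$ solves the homogeneous problem). The $\Delta t|\ln\Delta t|$ factor comes from the \emph{second}-derivative estimate $\|u_{tt}(s)\|_{L^2}\le Cs^{-1}\|f\|_{L^2}$ from analytic semigroup theory, used to bound $w_k=\tfrac{1}{\Delta t}\int_{t_{k-1}}^{t_k}(s-t_{k-1})u_{tt}(s)\,ds$ so that $\|w_j\|\le \|f\|/j$ and $\Delta t\sum_{j=1}^m 1/j\sim\Delta t|\ln\Delta t|$. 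Your $u_t$ estimate would not generate the logarithm. Likewise, the pointwise FEM bound $\|u(T)-u_h(T)\|_{L^\infty}\le Ch^2\|f\|_{L^2}$ is stronger than what $f\in L^2$ alone yields and stronger than what the paper uses; the paper works in the $L^2$ norm throughout for this step.

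Your explicit verification of Assumption~\ref{Assumption3}(2) via $v=P_\Psi f$ and the observation that $\mathcal S_{pod}(f-P_\Psi f)=0$ (the source enters the scheme only through $(f,\psi)$ with $\psi\in\Psi$) is correct and is a point the paper leaves implicit.
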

\begin{proof}
To simplify the notation in the proof, let $\|\cdot\|$ denote the norm in $L^2(\Omega)$, unless otherwise specified. For $k=1,..., m$ , we consider the decomposition as follows:
\begin{align}
	U_{k}-\hat{u}\left(t_{k}\right)=U_{k}-\mathcal{P}^{{N_{pod}}} \hat{u}\left(t_{k}\right)+\mathcal{P}^{{N_{pod}}} \hat{u}\left(t_{k}\right)-\hat{u}\left(t_{k}\right)=\vartheta_{k}+\varrho_{k},
\end{align}
where $\vartheta_{k}=U_{k}-\mathcal{P}^{{N_{pod}}} \hat{u}\left(t_{k}\right)$, $\varrho_{k}=\mathcal{P}^{{N_{pod}}} \hat{u}\left(t_{k}\right)-\hat{u}\left(t_{k}\right)$, and $\mathcal{P}^{{N_{pod}}}$ denotes the  Ritz-projection to $\Psi$ under the bilinear form $a(\cdot,\cdot)$. The projection error $\varrho_{m}$ can be bounded by \eqref{SnapshotOfSolutions}. In addition, by applying classic result of parabolic equation, i.e.,  $\| u(\cdot,t_j)\|\leq C\| f\|$, $\forall j=1,\cdots,m$ and $\| u_t(\cdot,t_j)\|\leq C\|  f\|$, $\forall j=1,\cdots,m$, we know that, 
\begin{align}\label{est:rho_m}
	\varrho_{m}\leq C \sqrt{(2m+1)\rho}\| f\|.
\end{align}

Now we only need to estimate the term $\vartheta_{m}$. According to  \eqref{eqn:fully_discrete_POD}, we have 
\begin{align}\label{eqn:theta_weak_formula}
	\begin{aligned}
		\left(\bar{\partial} \vartheta_{k}, \psi\right)+a\left(\vartheta_{k}, \psi\right)=&\left(\bar{\partial} U_{k}, \psi\right)+a\left(U_{k}, \psi\right) -\left(\bar{\partial} \mathcal{P}^{{N_{pod}}} \hat{u}\left(t_{k}\right), \psi\right)-a\left(\mathcal{P}^{{N_{pod}}} \hat{u}\left(t_{k}\right), \psi\right) \\
		=&\left(f\left(t_{k}\right), \psi\right)-\left(\bar{\partial} \mathcal{P}^{{N_{pod}}} \hat{u}\left(t_{k}\right), \psi\right)-a\left(\hat{u}\left(t_{k}\right), \psi\right) \\
		=&\left(v_{k}, \psi\right),
	\end{aligned}
\end{align}
where 
\begin{align*}
	v_{k}=\hat{u}_{t}\left(t_{k}\right)-\bar{\partial} \mathcal{P}^{{N_{pod}}} \hat{u}\left(t_{k}\right)=\hat{u}_{t}\left(t_{k}\right)-\bar{\partial} \hat{u}\left(t_{k}\right)+\bar{\partial} \hat{u}\left(t_{k}\right)-\bar{\partial} P^{{N_{pod}}} \hat{u}\left(t_{k}\right) .
\end{align*}
We define $w_{k}=\hat{u}_{t}\left(t_{k}\right)-\bar{\partial} \hat{u}\left(t_{k}\right)$ and $z_{k}=\bar{\partial} \hat{u}\left(t_{k}\right)-\mathcal{P}^{{N_{pod}}} \bar{\partial} \hat{u}\left(t_{k}\right)$. Substituting  $\psi=\vartheta_{k} $ into \eqref{eqn:theta_weak_formula}, we derive that
\begin{align}
	\left\|\vartheta_{k}\right\|^{2}-\left(\vartheta_{k}, \vartheta_{k-1}\right)+\Delta t a\left(\vartheta_{k}, \vartheta_{k}\right) \leq \Delta t\left\|v_{k}\right\|\left\|\vartheta_{k}\right\|
\end{align}
and therefore,
\begin{align}
	\left\|\vartheta_{k}\right\| \leq \frac{1}{1+\frac{a_{min}}{\alpha} \Delta t}\left(\left\|\vartheta_{k-1}\right\|+\Delta t\left\|v_{k}\right\|\right),
\end{align}
where $a_{min}$ is the minimum of the coefficient $a(x)$ in the elliptic operator and $\alpha$ is a constant such that 
\begin{align}
	\|\phi\|^2_{L_2}\leq \alpha \|\phi\|_{H^1} \quad \forall \phi\in V_h.
\end{align}
Taking $\gamma=\frac{a_{min}}{\alpha}$, we have 
\begin{align}\label{eqn:est_vartheta1}
	\left\|\vartheta_{m}\right\| \leq\left(\frac{1}{1+\gamma \Delta t}\right)^{m}\left\|\vartheta_{0}\right\|+\Delta t \sum_{j=1}^{m}\left(\frac{1}{1+\gamma \Delta t}\right)^{m-j+1}\left\|v_{j}\right\|.
\end{align}
As $\| v_j\|\leq\| w_k\|+\| z_k\|$, so \eqref{eqn:est_vartheta1} is taken into two parts. 
\begin{align}
	\Delta t \sum_{j=1}^{m}\xi^{m-j+1}\left\|z_{j}\right\|\leq&\Delta t \sqrt{(\sum_{j=1}^m\xi^{2(m-j+1)})(\sum_{j=1}^m\| z_k\|^2)},
\end{align}
where $\xi$ denotes $\frac{1}{1+\gamma\Delta t}$. Using the facts that $\left(\frac{1}{\xi}\right)^{2 m}=(1+\gamma \Delta t)^{2 m}=\left(1+\frac{2 \gamma T}{2 m}\right)^{2 m} \leq e^{2 \gamma T}$ and $\xi^{-2}-1=(1+\gamma\Delta t)^2-1\geq 2\gamma\Delta t$, we can obtain that 
%Clearly,
%\begin{align}
%	\left(\frac{1}{\xi}\right)^{2 m}=(1+\gamma \Delta t)^{2 m}=\left(1+\frac{2 \gamma T}{2 m}\right)^{2 m} \leq e^{2 \gamma T},
%\end{align}
%and
%\begin{align}
%	\xi^{-2}-1=(1+\gamma\Delta t)^2-1\geq 2\gamma\Delta t,
%\end{align}
%so
\begin{align}
\Delta t \sqrt{(\sum_{j=1}^m\xi^{2(m-j+1)})(\sum_{j=1}^m\| z_k\|^2)}=&T\sqrt{(\frac{1}{m}\sum_{j=1}^m\xi^{2(m-j+1)})(\frac{1}{m}\sum_{j=1}^m\| z_j\|^2)}\nonumber\\
\leq&T\sqrt{(\frac{1}{m}\frac{1-\xi^{2m}}{\xi^{-2}-1})(\frac{1}{m}\sum_{j=1}^m\| z_j\|^2)}\nonumber\\
\leq&T\sqrt{\frac{1-e^{-2\gamma T}}{2\gamma T}(\frac{1}{m}\sum_{j=1}^m\| z_j\|^2)}.
\end{align}
By Corollary 4 in \cite{kunisch2001galerkin}, we have that $	\frac{1}{m}\sum_{j=1}^m\| z_j\|^2\leq C\rho\| f\|^2$, which implies that 
%\begin{align}
%	\frac{1}{m}\sum_{j=1}^m\| z_j\|^2\leq C\rho\| f\|^2,
%\end{align}
%so,
\begin{align}\label{est:zj}
	\Delta t \sum_{j=1}^{m}\xi^{m-j+1}\left\|z_{j}\right\|\leq C(T)\sqrt{\rho}\| f\|.
	\end{align}
Now we turn to estimate the term $\Delta t \sum_{j=1}^{m}\xi^{m-j+1}\left\|w_{j}\right\|$. By definition,
\begin{align}
	\begin{aligned}
		w_{j} &=u_{t}\left(t_{j}\right)-\frac{u\left(t_{j}\right)-u\left(t_{j-1}\right)}{\Delta t} \\
		&=\frac{1}{\Delta t}\left(\Delta t u_{t}\left(t_{j}\right)-\left(u\left(t_{j}\right)-u\left(t_{j-1}\right)\right)\right) \\
		&=\frac{1}{\Delta t} \int_{t_{j-1}}^{t_{j}}\left(s-t_{j-1}\right) u_{t t}(s) d s.
	\end{aligned}
\end{align}
By the analytic semigroup theory \cite{renardy2006introduction} or Lemma 3.6 in \cite{Chen-Zhang2021}, we have the following regularity estimate,  
\begin{align}
	\| u_{tt}(s)\|\leq\frac{C}{s}\| f\|,
\end{align}
which implies,
\begin{align}
	\| w_j\|\leq\frac{1}{\Delta t} \int_{t_{k-1}}^{t_{j}}\left(s-t_{k-1}\right)\frac{1}{s}\| f\| ds.
	\end{align}
When $j=1$, 
\begin{align}
	\| w_1\|\leq\frac{1}{\Delta t} \int_{0}^{\Delta t}\| f\| ds=\| f\|,
\end{align}
and when $j\geq 2$,
%\begin{align}
%	\|w_k\|\leq&\frac{1}{\Delta t} \int_{t_{j-1}}^{t_{j}}\left(s-t_{j-1}\right)\frac{1}{t_{j-1}}\|f\| ds\\
%	=&\frac{1}{2}\|f\|\frac{1}{(j-1)}\leq\frac{\|f\|}{j}.
%\end{align}
\begin{align}
	\| w_k\|\leq \frac{1}{\Delta t} \int_{t_{j-1}}^{t_{j}}\left(s-t_{j-1}\right)\frac{1}{t_{j-1}}\| f \| ds = \frac{1}{2}\| f\|\frac{1}{(j-1)}\leq\frac{\| f \|}{j}.
\end{align}
Therefore, we obtain that 
\begin{align}
	\Delta t \sum_{j=1}^{m}\xi^{m-j+1}\left\|w_{j}\right\| \leq   	\Delta t \sum_{j=1}^{m}\xi^{m-j+1} \frac{\| f\|}{j} 
	\leq   \Delta t \sum_{j=1}^{m} \frac{\| f\|}{j} \leq C\Delta t |\ln(\Delta t)| \| f \|\label{est:wj}.
\end{align}
%\begin{align}
%	\Delta t \sum_{j=1}^{m}\xi^{m-j+1}\left\|w_{j}\right\| \leq & 	\Delta t \sum_{j=1}^{m}\xi^{m-j+1} \frac{\| f\|}{j}\\
%	\leq & \Delta t \sum_{j=1}^{m} \frac{\| f\|}{j} \leq C\Delta t |\ln(\Delta t)| \| f \|\label{est:wj}.
%\end{align}
Summing up \eqref{est:zj}, \eqref{est:wj} and using the classic parabolic FEM theory (e.g. Theorem 1.2 in \cite{thomee1990finite}), we have that 
\begin{align}\label{est:parabolic_FEMsemi}
	\| \hat{u}\left(t_{m}\right)-u\left(t_{m}\right)\|\leq Ch^2\| f \|.
\end{align} 
Therefore, we can prove that
\begin{align}
	\| U_m-u(T)\|\leq C\left(h^2+\Delta t|\ln(\Delta t)|+\sqrt{\frac{T}{\Delta t}\rho}\right)\| f \|.
\end{align}
Finally,  we apply the Proposition \ref{thm:4.1} by taking $\mathcal{S}_{pod}$ as $\mathcal{S}_{dis}$ and $f_{pod}$ as $f_{dis}$, and prove the estimates in the Theorem \ref{thm_convergence}. Clearly, $e_{pod}=\left(h^2+\Delta t|\ln(\Delta t)|+\sqrt{\frac{T}{\Delta t}\rho}\right)^2$.
\end{proof}

\begin{rem}
Theorem \ref{thm_convergence} suggests that the optimal smoothing parameter in Eq.\eqref{opti-para} may still apply. In particular, if $e_{pod}\le C\lambda_n$ and ${N_{pod}} e_{pod}\leq C\lambda_n^{1-1/\alpha}$, we have
	\begin{align}\label{pod2}
		\mathbb{E}\big[\|\mathcal{S}f^*- \mathcal{S}_{pod}f_{pod}\|_n^2\big]\le C\lambda_n\|f^*\|_{L^2(\Omega)}^2+\frac{C\sigma^2}{n\lambda_n^{d/4}},
	\end{align}
	\begin{align}
		\mathbb{E}\big[\|f^*- f_{pod}\|_{L^2(\Omega)}^2\big]\le C\|f^*\|_{L^2(\Omega)}^2+\frac{C\sigma^2}{n\lambda_n^{1+d/4}},
	\end{align}
	and
		\begin{align}
		\mathbb{E}\big[\|f^*- f_{pod}\|_{H^{-1}(\Omega)}^2\big]\le C\lambda^{1/2}_n\|f^*\|_{L^2(\Omega)}^2+\frac{C\sigma^2}{n\lambda_n^{1/2+d/4}}.
	\end{align}
	Note that $\rho$ depends on the number of POD basis functions, ${N_{pod}}$, and quickly decreases to zero when increasing ${N_{pod}}$. Assumptions above are achievable by taking $h$ and $\Delta t$ to be small and ${N_{pod}}$ to be small with respect to the DOF of the FEM.

\end{rem} 
\section{Numerical examples}\label{num-sec}
%%%%%%%%%%%%%%%%%%%%%%%%%%%%%%%%%%%%%%%
\noindent
In this section, we present numerical examples to investigate the performance of the proposed POD method in solving parabolic inverse source problems. Two types of source functions, i.e., letters and circles at different locations, will be studied. In the example of letters, we verify the convergence introduced in Theorem \ref{thm_convergence} and apply the iterative method to find optimal smoothing parameters. In the second example, we mainly study the approximation property of our POD basis functions.

\subsection{Recovering Letters}\label{sec:eg1}

\paragraph{General Settings} For the first example, we apply the POD method to recover the source term $f$ in form of discontinuous patterns.  To be precise, $f$ is an indicator function, with the maximum value $1$, of a set in $[0,1]^2$ domain whose shape is decided by a capital letter. For each observation data, we first apply the FEM with $h=1/32$ and time step $\Delta t=1/32$ to get the solution of the forward problem with exact source term $f$ at final time $T=1$. We interpolate the FEM solution to locations of $100\times 100$ evenly distributed sensor and add a Gaussian unbiased noise with stand derivation $10^{-3}$ to each observation. It corresponds to a $10\%$ noise level. Initial value $g=0$ in \eqref{zz0}.

To construct the POD basis functions, we generate the snapshots source term decided by letters from $A$ to $O$. The snapshots are generated by solving the forward problem with each source term via FEM (space mesh size $h=1/32$, time step $\Delta t=1/32$). Under such settings,  $21$ POD basis functions are constructed to resolve the snapshots of the forward problem with $10^{-4}$ accuracy. In other words, the POD basis functions cover $1-10^{-4}$ of eigenspace from the snapshots with respect to the $L^2$ norm. To solve the inverse problem, $\lambda$ is set to be  $10^{-6}$ and we will discuss how to find the optimal $\lambda$ later. 
\paragraph{Computational Cost} In Figure \ref{fig:eg1_multi_letters}, we show some of the letters recovered. The computational cost of the POD method includes two parts. First, we will construct the POD basis functions for all the $15$ letters, which will take about $40.3$ seconds on a  desktop computer (3.1GHz, 6-core, i5, Matlab). Then, for each letter, it will take about $164$  seconds to solve the optimization problem via gradient descend (relative $L_2$ threshold $10^{-5}$) using the POD basis functions. As a comparison, with the FEM basis functions, it takes $1328.4$ seconds to recover similar patterns with the same configuration during optimization. In the case of recovering one single letter, the POD method has increased the speed by $6.5\times$, and in the case of recovering all $15$ letters, it achieves about $8\times$ speed-up. 

\begin{figure}[htb]
	\centering
	\subfigure[A]{\includegraphics[width=0.32\linewidth]{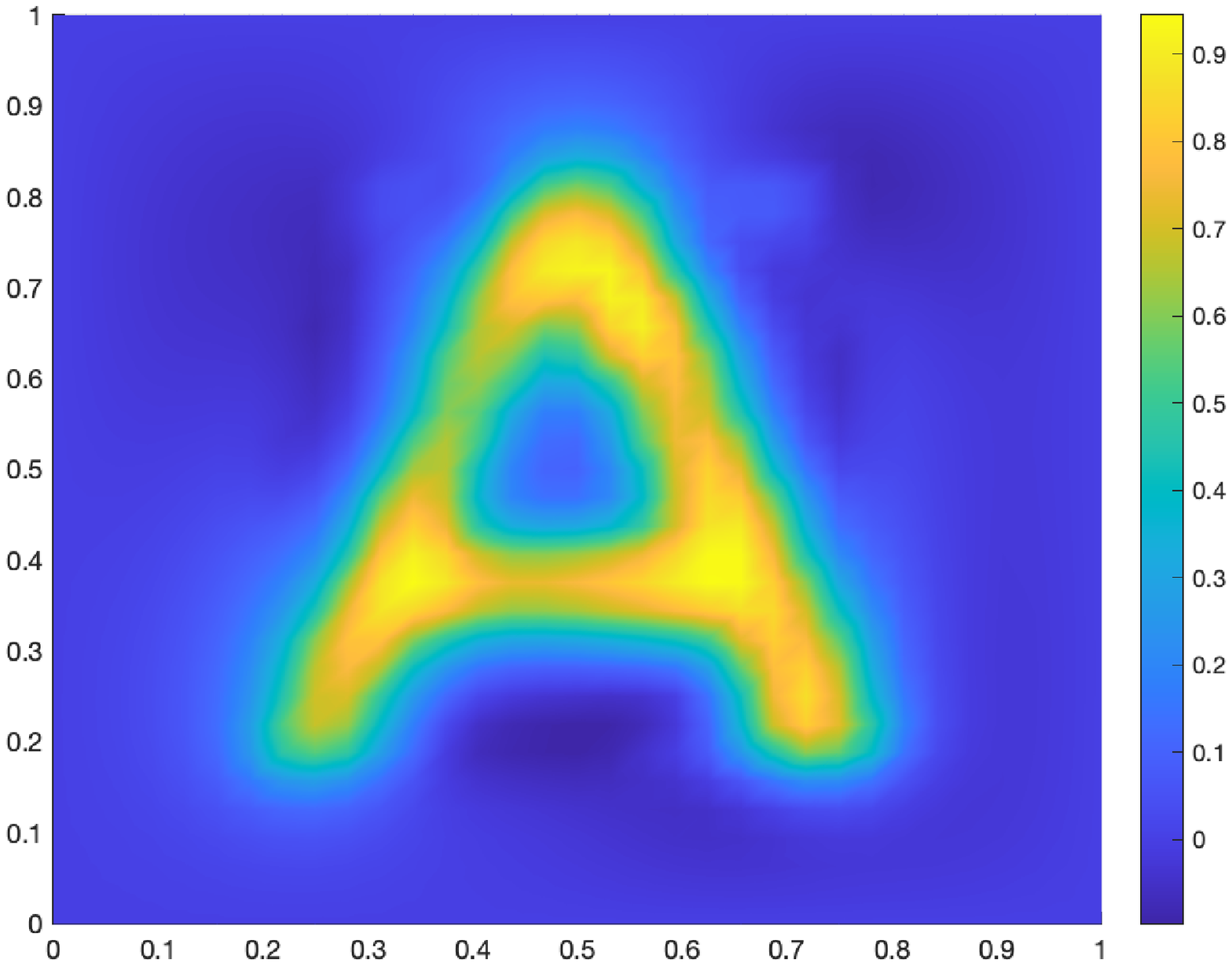}}
	\subfigure[C]{\includegraphics[width=0.32\linewidth]{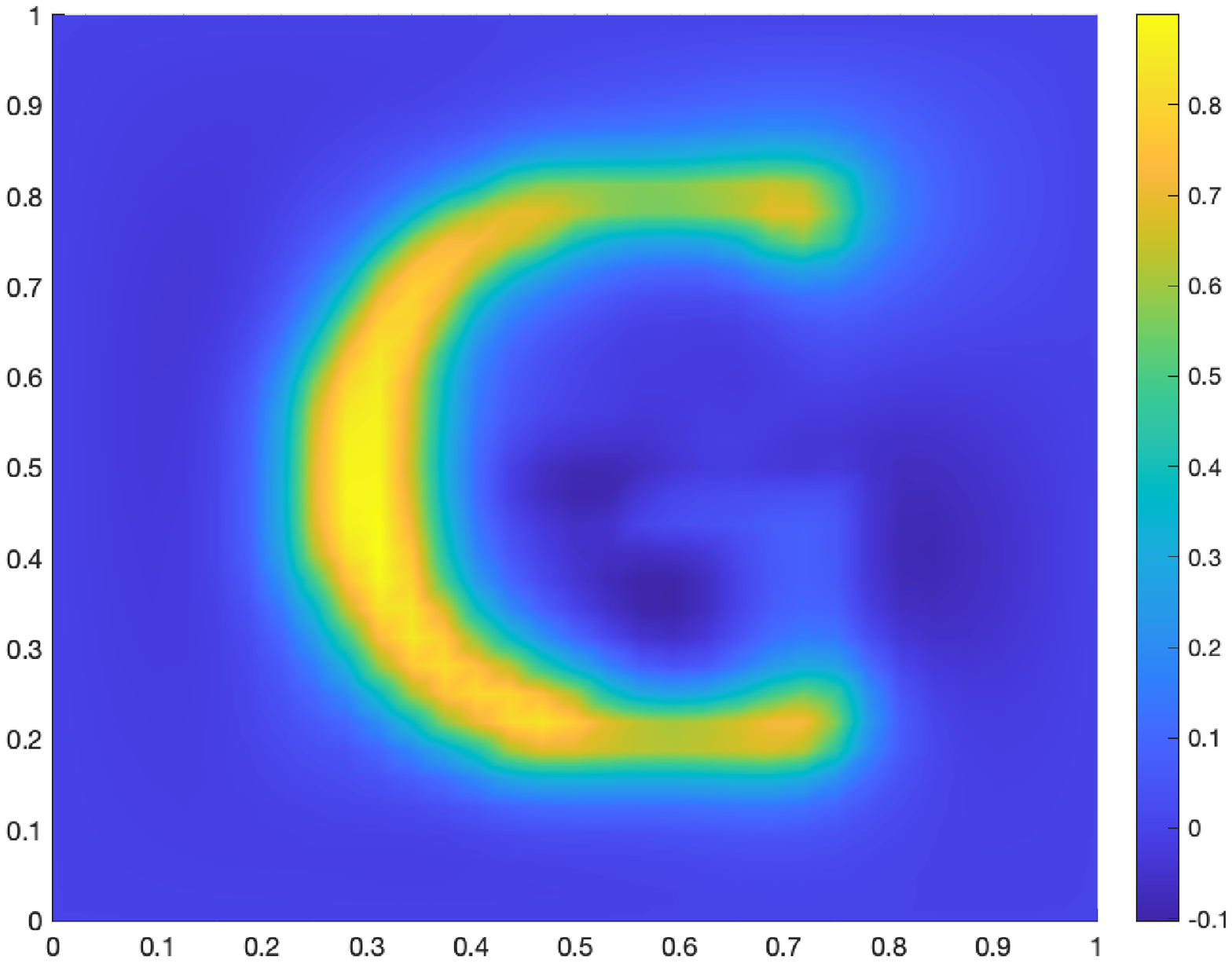}}	
	\subfigure[G]{\includegraphics[width=0.32\linewidth]{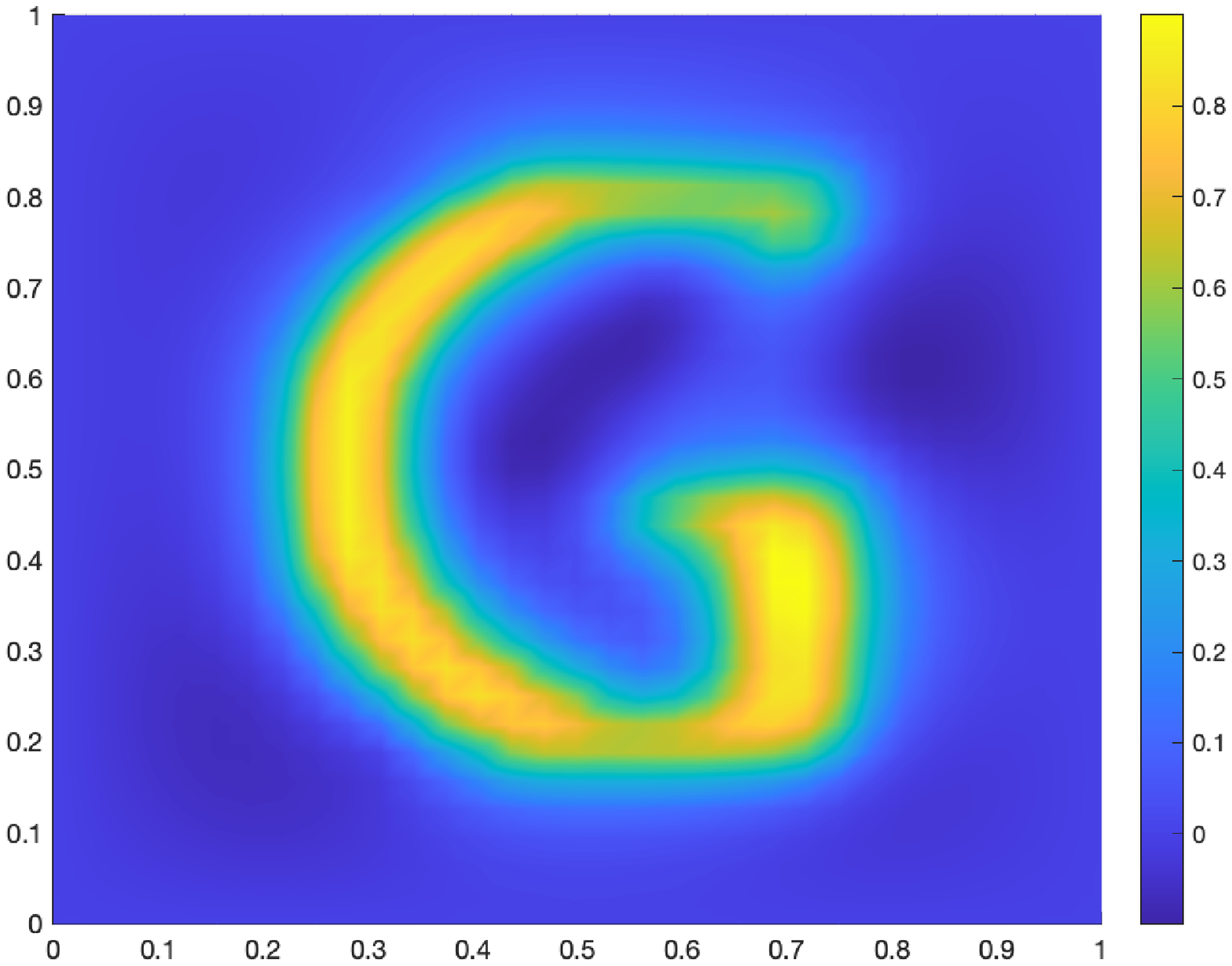}}
	\subfigure[reference A]{\includegraphics[width=0.32\linewidth]{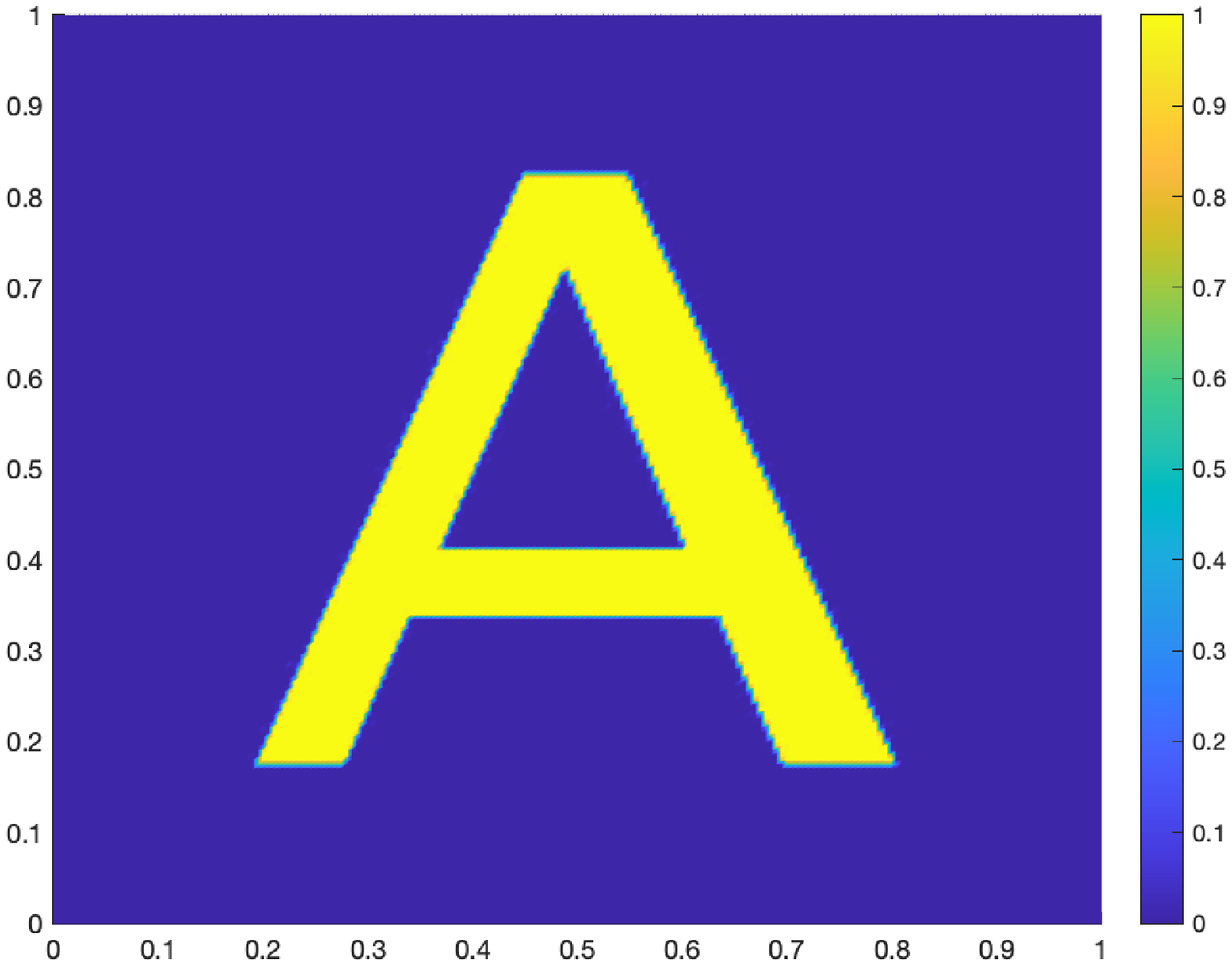}}		\subfigure[D]{\includegraphics[width=0.32\linewidth]{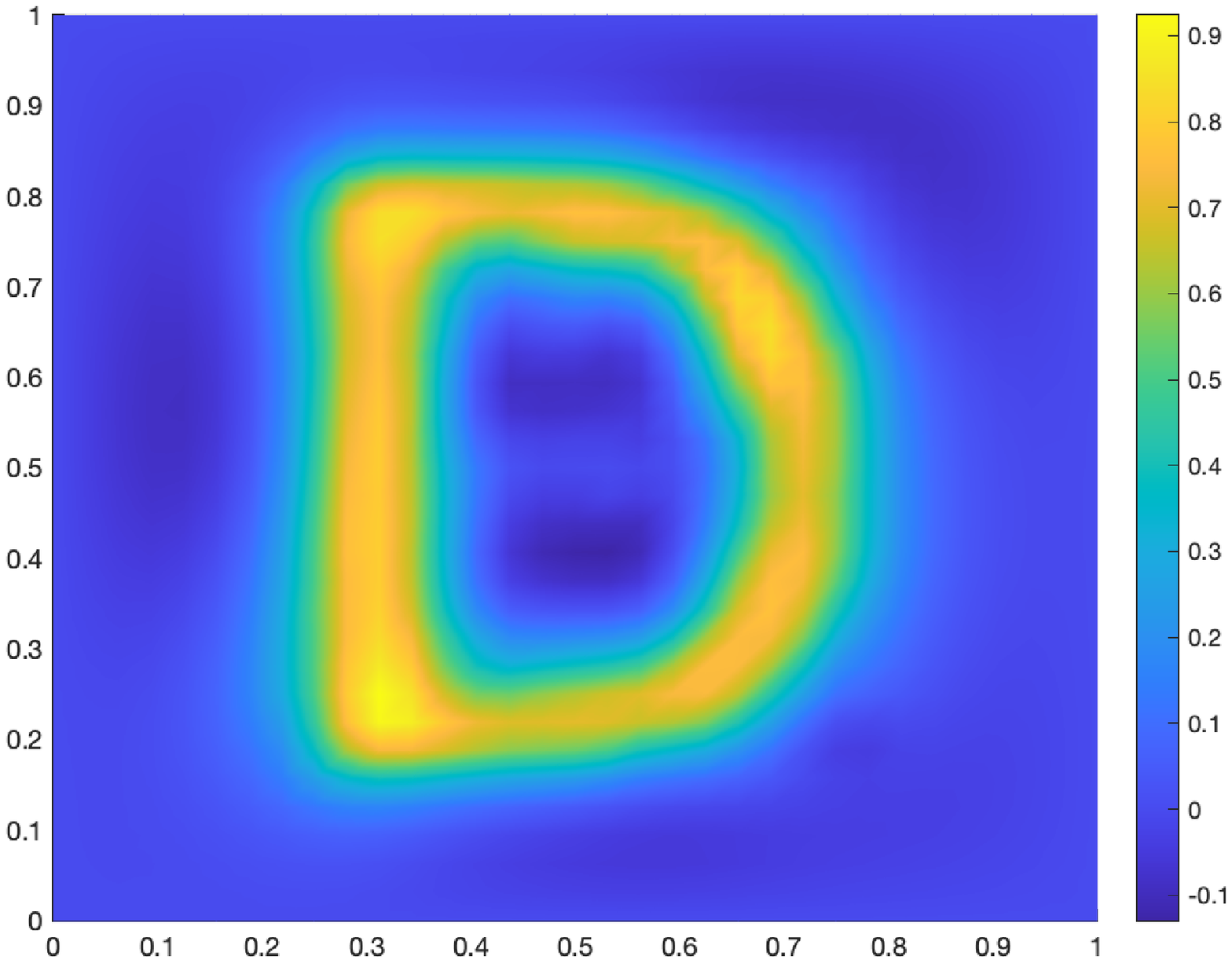}}	\subfigure[O]{\includegraphics[width=0.32\linewidth]{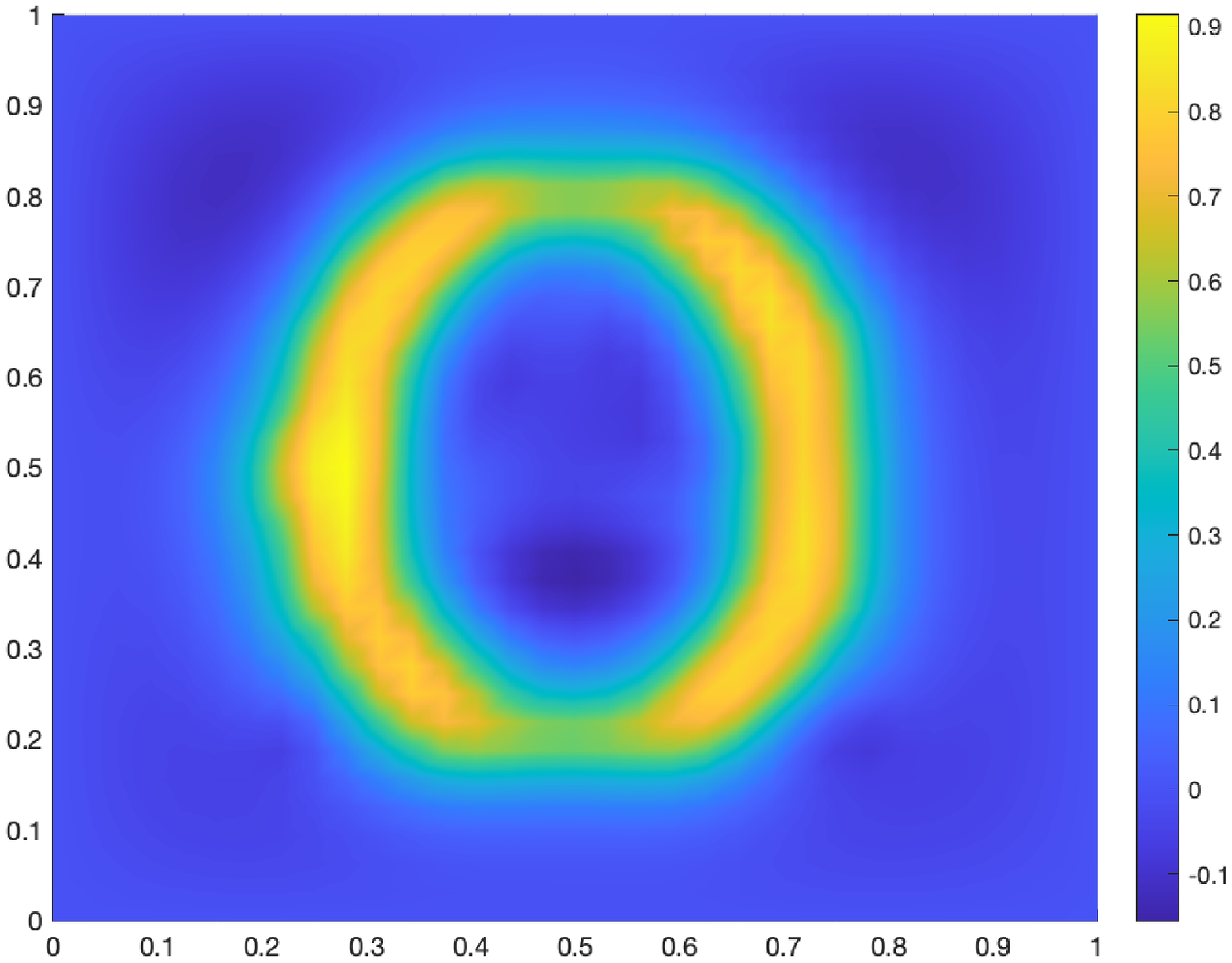}}
	\caption{Recovering letters based on basis constructed from 'A' to 'O'}
	\label{fig:eg1_multi_letters}
\end{figure}
We also compare the POD method with the FEM in the case when $h=1/20$. We construct the POD basis from snapshots of the forward problem with $h=1/20$.  $ 23 $ basis functions are constructed to achieve $10^{-4}$ accuracy. As the DOF during the optimization procedure stays near, the computational cost with POD basis functions is similar between different $h$'s (i.e., $ 47 $ seconds in constructing basis functions and $ 159 $ seconds in solving optimization problems). While with the FEM basis functions, the computation cost drops near quadratically against the DOF. It takes $ 340 $ seconds for $h=1/20$ case, which corresponds to $400$ DOF. In Figure \ref{fig:eg1_A_different_h} we show the comparison between the FEM and POD method with different $h$'s. The observation noise is the same under the same $h$.   Generally speaking, the restoration by the POD method is less affected by the noise than the FEM. This is due to the fact that the noise is perpendicular to the space of the POD basis functions. For a convincing outcome, the POD method is significantly faster than the FEM. 

\begin{figure}[htb]
	\centering
	\subfigure[POD, $h=1/20$]{\includegraphics[width=0.45\linewidth]{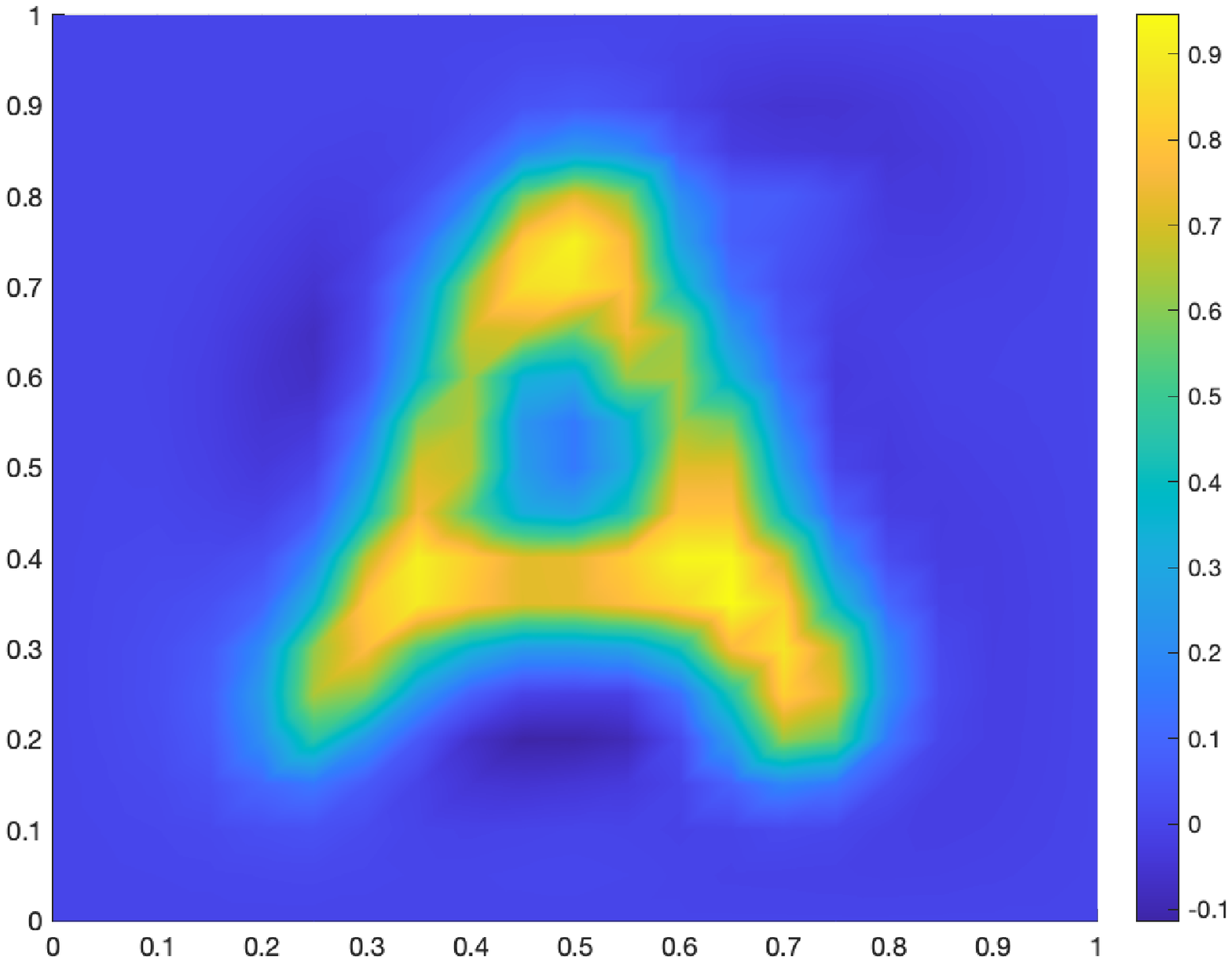}}
	\subfigure[FEM, $h=1/20$]{\includegraphics[width=0.45\linewidth]{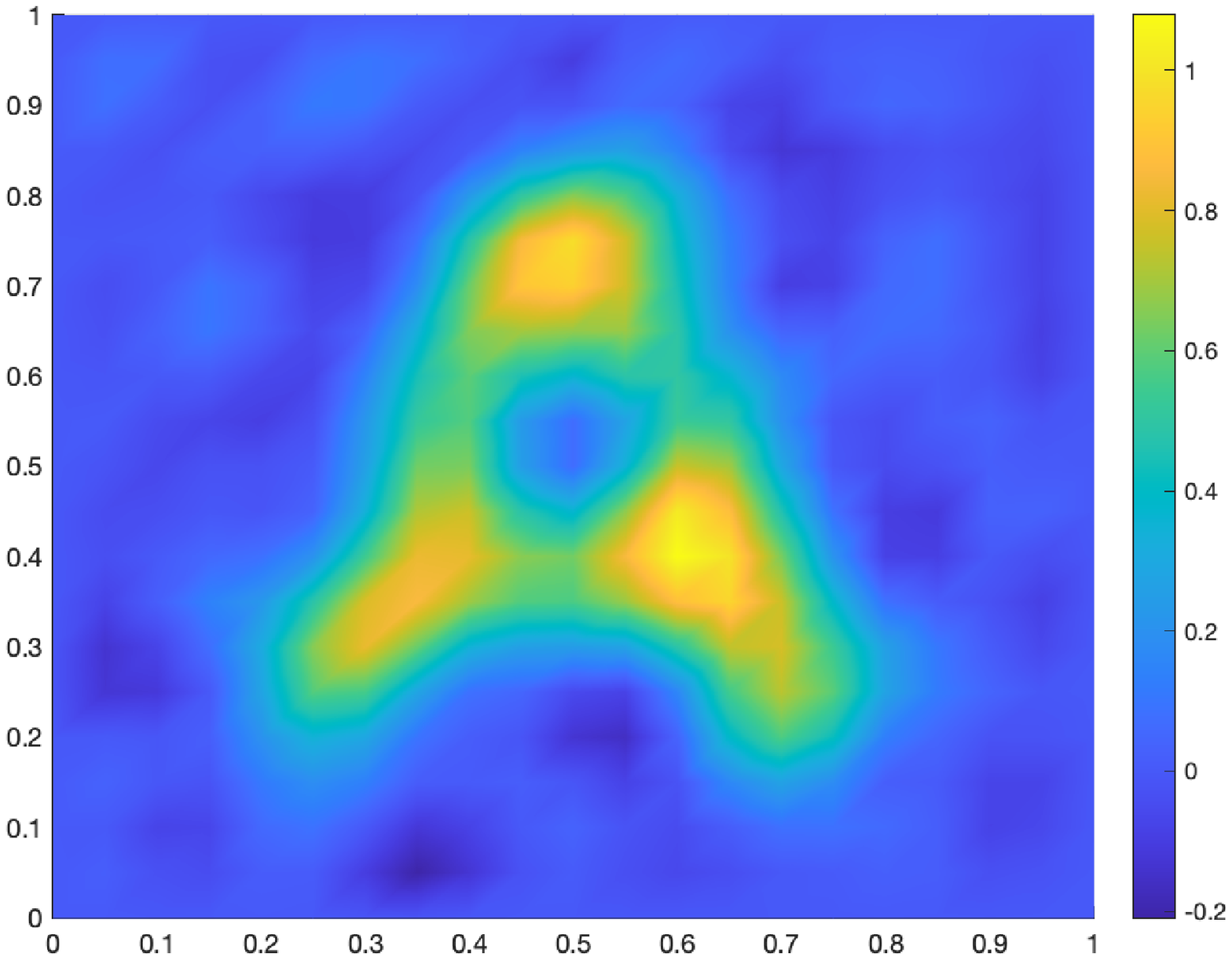}}\\
	\subfigure[POD, $h=1/32$]{\includegraphics[width=0.45\linewidth]{figures_POD/letters_A_n32.eps}}
	\subfigure[FEM, $h=1/32$]{\includegraphics[width=0.45\linewidth]{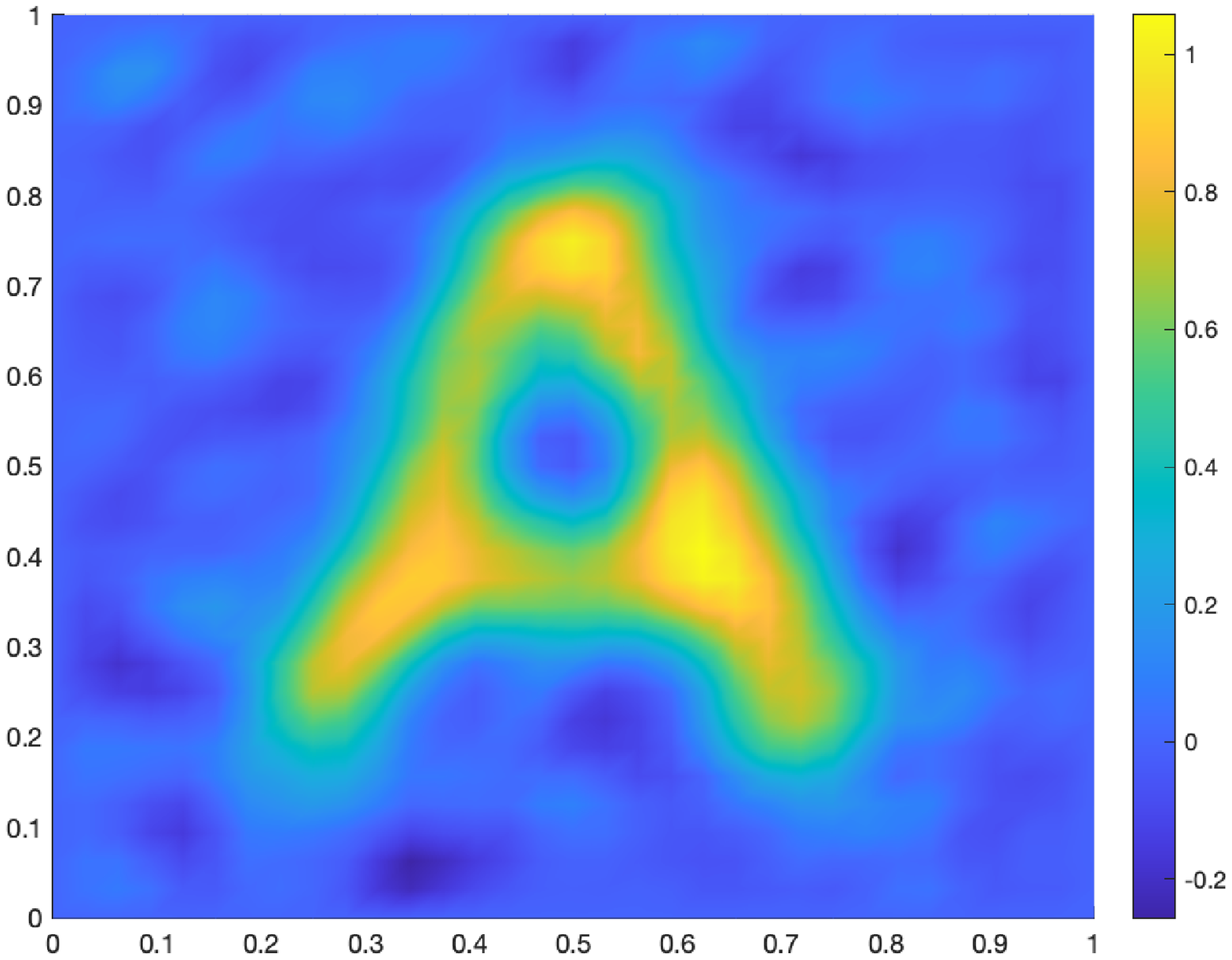}}
	
	\caption{Recovering letter 'A' with different basis and $h$}
	\label{fig:eg1_A_different_h}
\end{figure}
\paragraph{Error Investigation} According to the convergence analysis studied in  Theorem \ref{thm_convergence}, there are three factors that affect the accuracy of our algorithm, i.e., mesh size of FEM basis, regularization factor, and approximation property of the POD basis functions. As such,  we set up three experiments when recovering the letter 'A'. Under each experiment, we generate  $100$ set of observation $y$'s and solve the optimization with the pre-computed POD basis functions. The standard derivation of the noise is still $10^{-3}$. It corresponds to a $10\%$ noise level.  We compute the mean of the error in each set of parameters and show them in Figure  \ref{fig:eg1_err_analysis}. 
\subparagraph{Dependence on the mesh size}   In Figure \ref{fig:eg1_fem_err} we verify that given $\lambda$ when the mesh size $h$ of the FEM basis functions goes down, the error of $Sf$ will first decrease and then converged to a limit that is greater than zero. In the first stage, the order of convergence against mesh size $h$ is $2$. The limit in the second stage depends on $\lambda$ and the dependence is non-monotone. The number of POD basis functions does not change remarkably (also see in the previous example that compares the POD method with FEM) under different mesh sizes. Along with the monotonic decreasing of the error against $h\to 0$, it indicates all sets of POD basis functions all approximate some fixed continuous function space.
\subparagraph{Dependence on the regularization factor} To further investigate the limiting error, in \ref{fig:eg1_lambda_err}, we keep $h=1/32$ fixed and use different regularization factor $\lambda$'s. We find that the optimal $\lambda$ for $h=1/32$ with the $10\%$ noise level is between $10^{-7}$ and $10^{-8}$. The error is a V-shape function near the optimal $\lambda$.   
\subparagraph{Dependence on the number of POD basis functions} The last experiment is to study how the number of POD basis functions affects the error. In \ref{fig:eg1_pod_err}, we show the mean square error with $100$ sets of random generated observation. The error goes down as we increase the number of POD basis functions. The fitted slope in the loglog plot is  $-0.0145$, which indicates the errors exponentially decay with a coefficient $-0.0145$. The p-value for the fitted slope is smaller than $0.001$. 
\begin{figure}[htb]
	\centering
	\subfigure[Mesh size $h$ of the FEM.]{\includegraphics[width=0.32\linewidth]{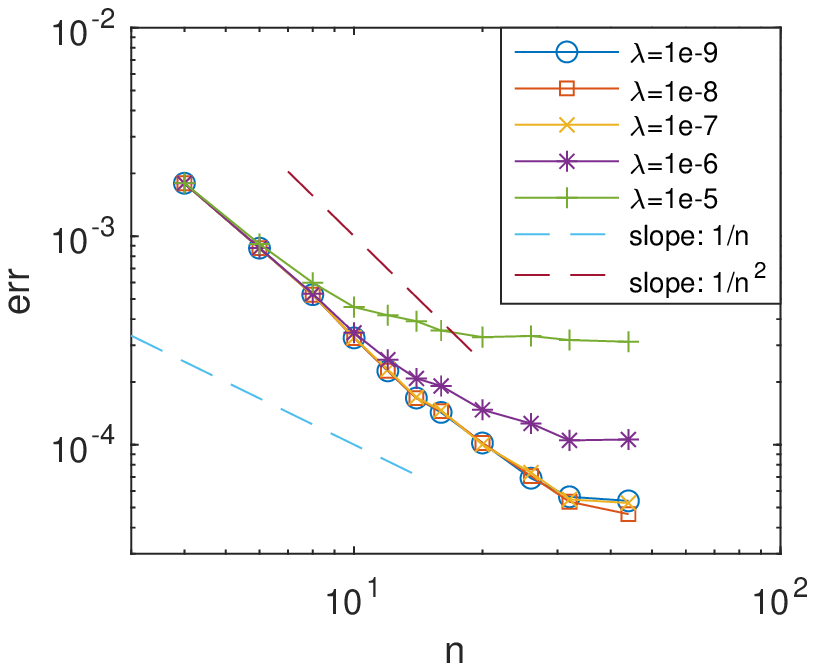}\label{fig:eg1_fem_err}}
	\subfigure[regularization factor  $\lambda$]{\includegraphics[width=0.32\linewidth]{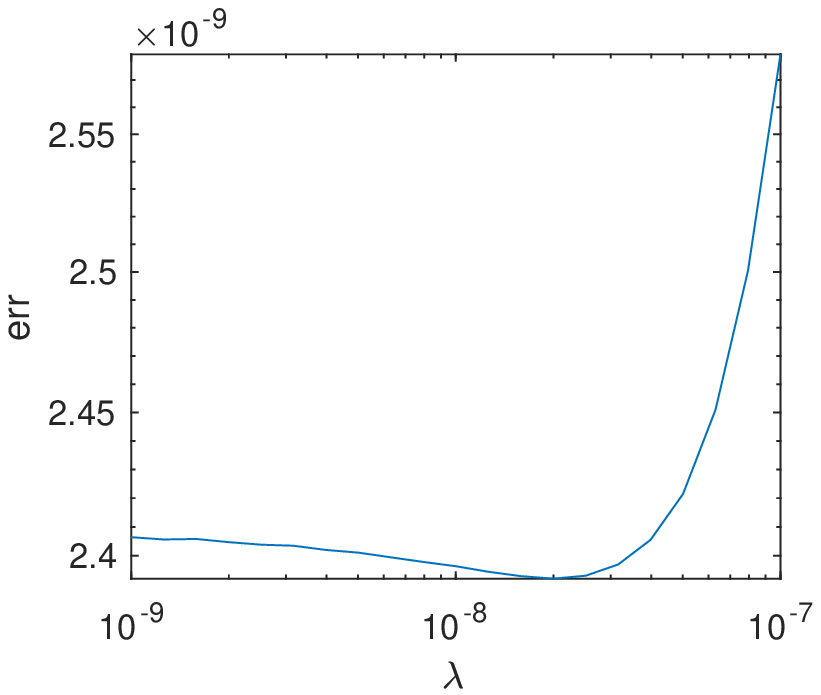}\label{fig:eg1_lambda_err}}
	\subfigure[POD basis number $n$, fitted slope: $1.45\time10^{-2}$]{\includegraphics[width=0.32\linewidth]{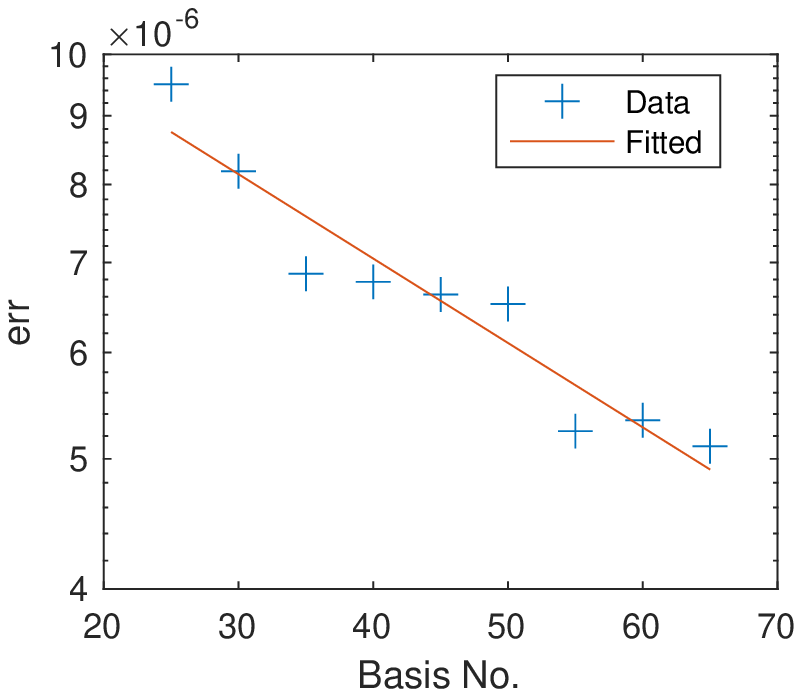}\label{fig:eg1_pod_err}}	
	\caption{Error dependence on different parameters}
	\label{fig:eg1_err_analysis}
\end{figure}
\paragraph{Iterative algorithm for finding the optimal $\lambda$} As analyzed in Eq.\eqref{opti-para}, the optimal $\lambda$ relies on $\|f^*\|^2_{L^2(\Om)}$. While in the setting of inverse problems, the norm of input $f^*$ is unknown. A natural choice is to find the optimal $\lambda$ by using a fixed-point iteration. To be precise, we start from an initial guess like $\lambda_{n, 0}=n^{-4 /(d+4)}$ and solve for $f_0$ with the POD basis functions. After solving $f_j$ the $j$-th step, we update $\lambda$ as,
\begin{align}\label{eqn:opt_lam_alg}
	\lambda_{n, j+1}^{1 / 2+d / 8}=n^{-1 / 2}\left\|S_{\tau, h} f_{j}-m\right\|_{n}\left\|f_{j}\right\|_{L^{2}(\Omega)}^{-1}.
\end{align}
We will stop the iteration of $\lambda$ until $\left\|f_{j}\right\|_{L^{2}(\Omega)}$ converges. As shown in Figure \ref{fig:eg1_opt} such iterative algorithms, with either POD or FEM basis, finds the correct scale of $\lambda$ with a similar pattern. It implies that the POD method captures the whole solution subspace with given known source function and is robust in iteration.
 To be noted, in Eq.\eqref{opti-para}, constant $C$ is not $1$ so the fixed-point iteration Eq.\eqref{eqn:opt_lam_alg} only finds the correct scale of $\lambda$ comparing with Figure \ref{fig:eg1_lambda_err}, in which the error due to $\lambda$ is computed with the knowledge of the ground truth solution $u(\cdot, T)$.
\begin{figure}[h]
	\centering
	\subfigure[by iterating on FEM basis]{\includegraphics[width=0.32\linewidth]{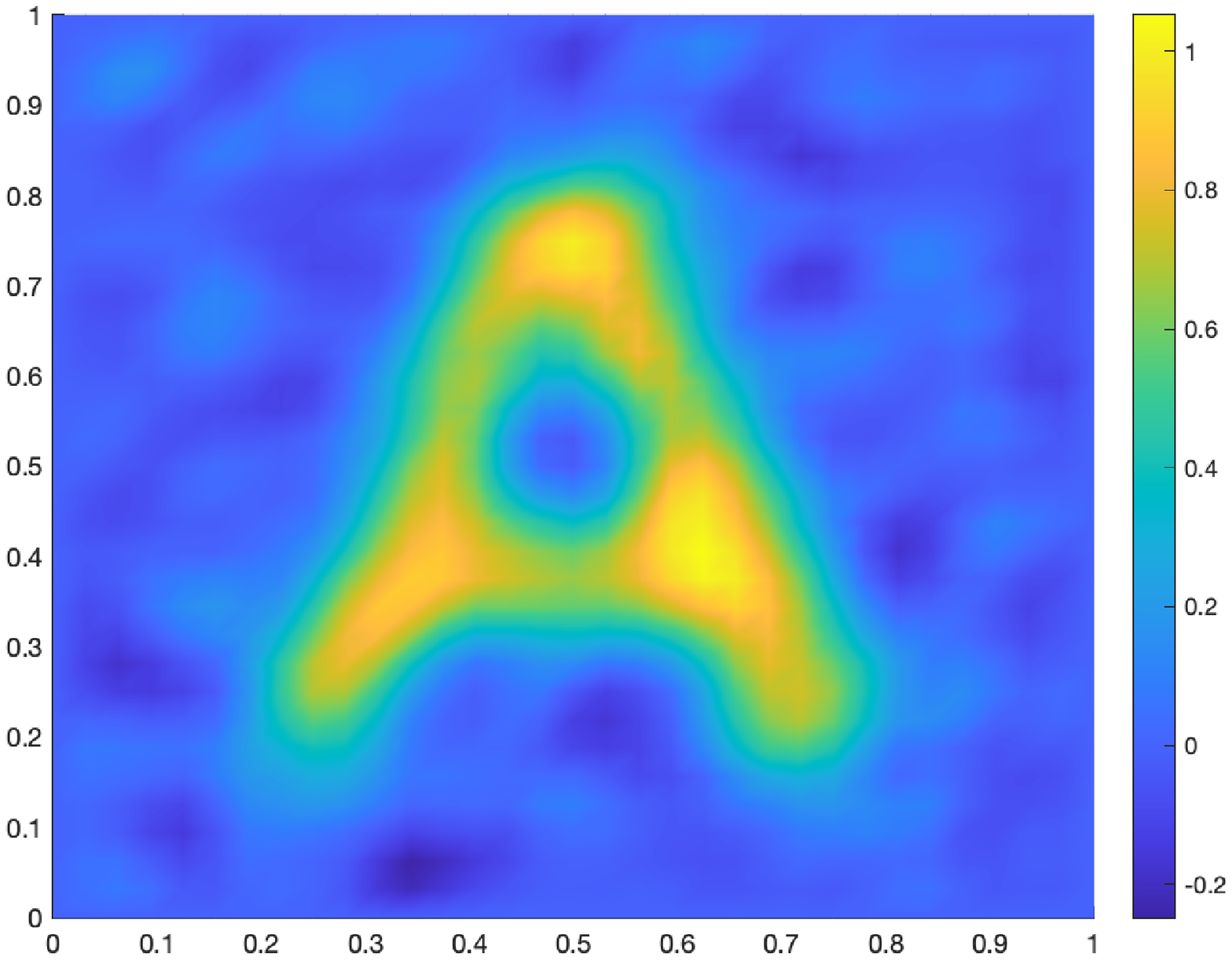}}
	\subfigure[by iteration on POD basis]{\includegraphics[width=0.32\linewidth]{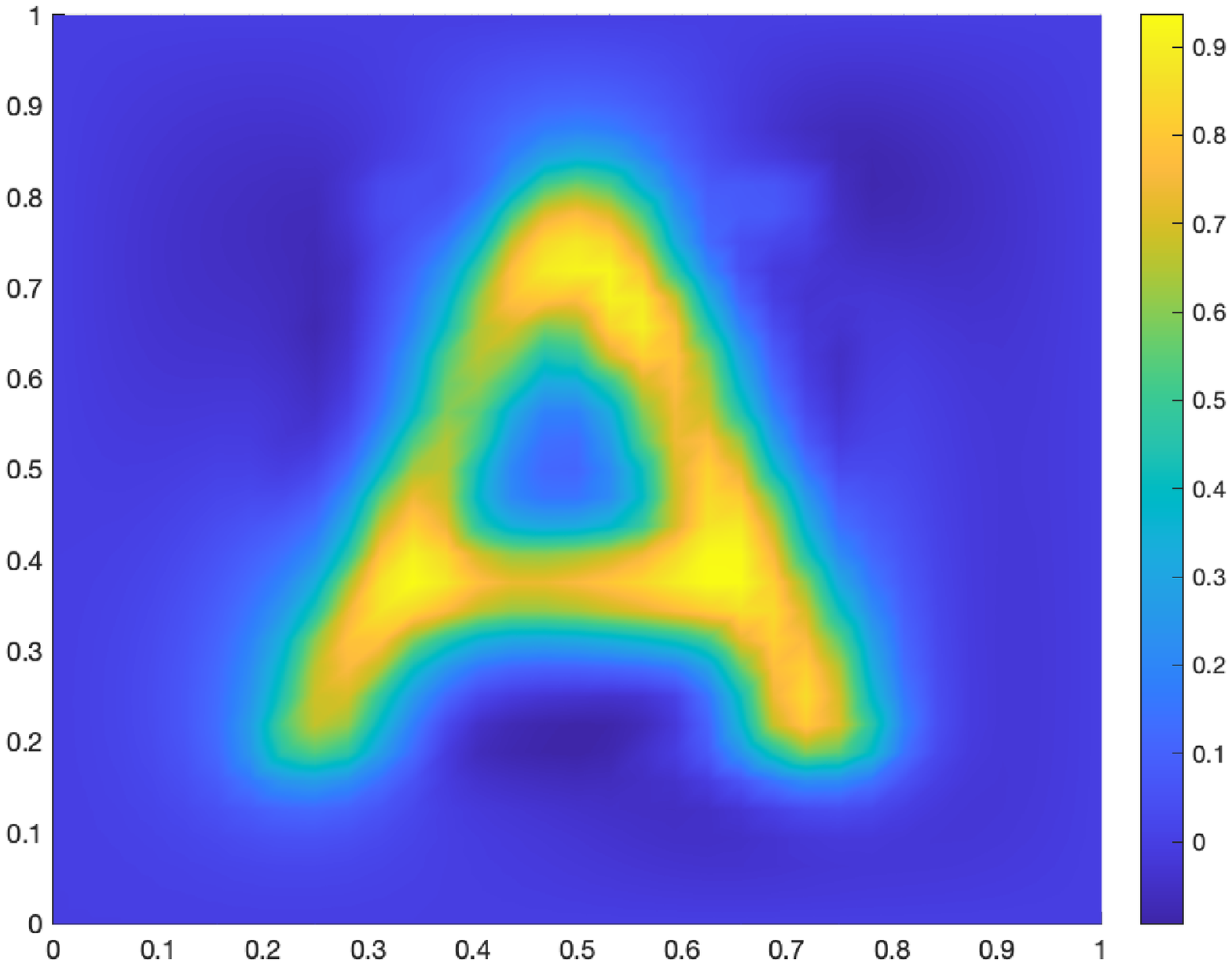}}
	\subfigure[$\lambda_n$ and $L_2$ error after each iteration]{\includegraphics[width=0.32\linewidth]{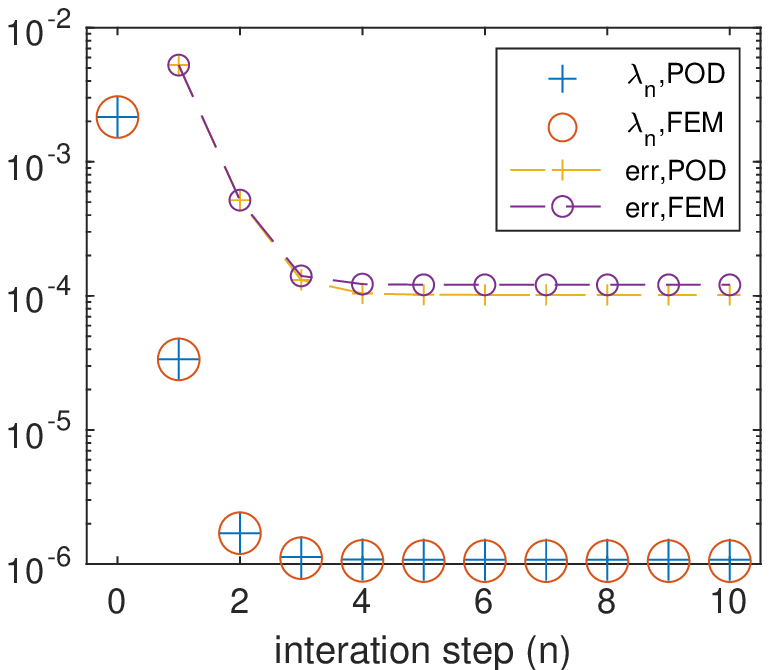}}	
	\caption{Finding optimal $\lambda$ }
	\label{fig:eg1_opt}
\end{figure}

\subsection{Recovering Moving Circles}
In addition to letter examples, we also applied the POD method to recover a ball pattern that moves on some given trajectories in two-dimensional space. Similar to recovering letters,  we construct the POD basis functions from snapshots that are generated by solving forward problems with classic FEM, where the mesh size $h=1/32$ and time step $\Delta t=1/32$. $\lambda$ is set to be $10^{-6}$ and $\sigma=2.75\times 10^{-3}$ which corresponds to a $10\%$ noise level.

\paragraph{Moving along a horizontal line} In such case, the source term $f$ is given by indicator function of set $\Omega_s=\{x\in[0,1]^2|(x_1-s)^2+(x_2-0.5)^2\leq 0.1^2\} $. When constructing the POD basis functions, we use $17$ values of $s$ that are equally distributed between interval $[0.15,0.85]$ including two terminal points. $14$ basis functions are extracted for $10^{-4}$ accuracy calculated by eigenvalues. In \ref{fig:eg2_moving_circles} we use the constructed basis on detecting circles with the same size. We find that the performance of the POD method is still acceptable even the centers of the circle do not coincide with the ones when we construct the POD basis functions. 

\paragraph{Moving along a ring} In addition, as shown in \ref{fig:eg2_moving_circles_2} we  recover the same pattern whose center are on a circle. To be specific, the source term $f$ is given by indicator function of set $\Omega_\theta=\{x\in[0,1]^2|(x_1-0.5-\cos(\theta)/4)^2+(x_2-0.5-\sin(\theta)/4)^2\leq 0.1^2\}$. $33$ basis are extracted for $10^{-4}$ accuracy. Even with a higher DOF, the POD method still yields $10\times$ acceleration comparing with FEM ($60$ seconds vs $615$ seconds).
\begin{figure}[h]
	\centering
	\subfigure[$s=0.3$]{\includegraphics[width=0.19\linewidth]{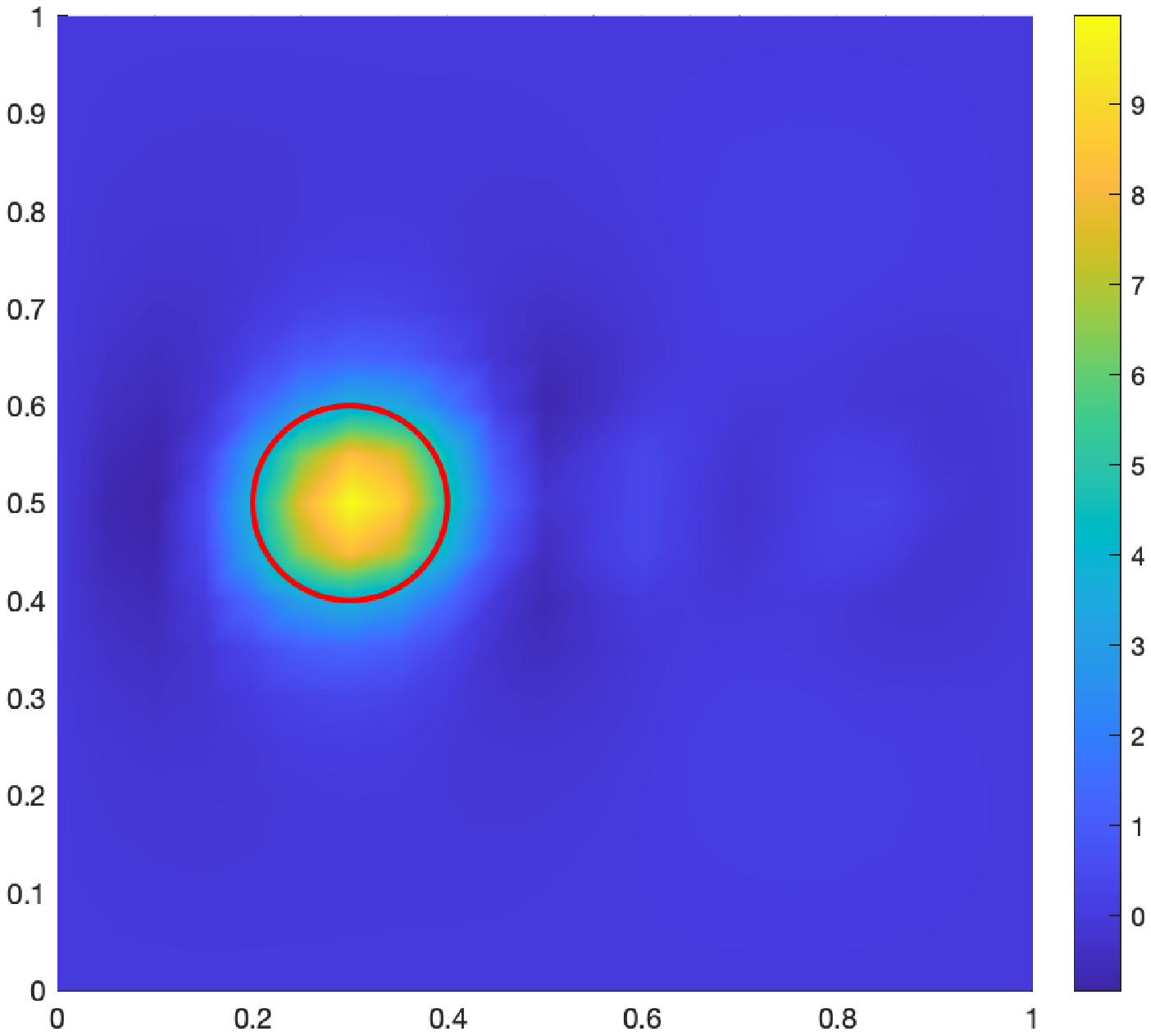}}
	\subfigure[$s=0.4$]{\includegraphics[width=0.19\linewidth]{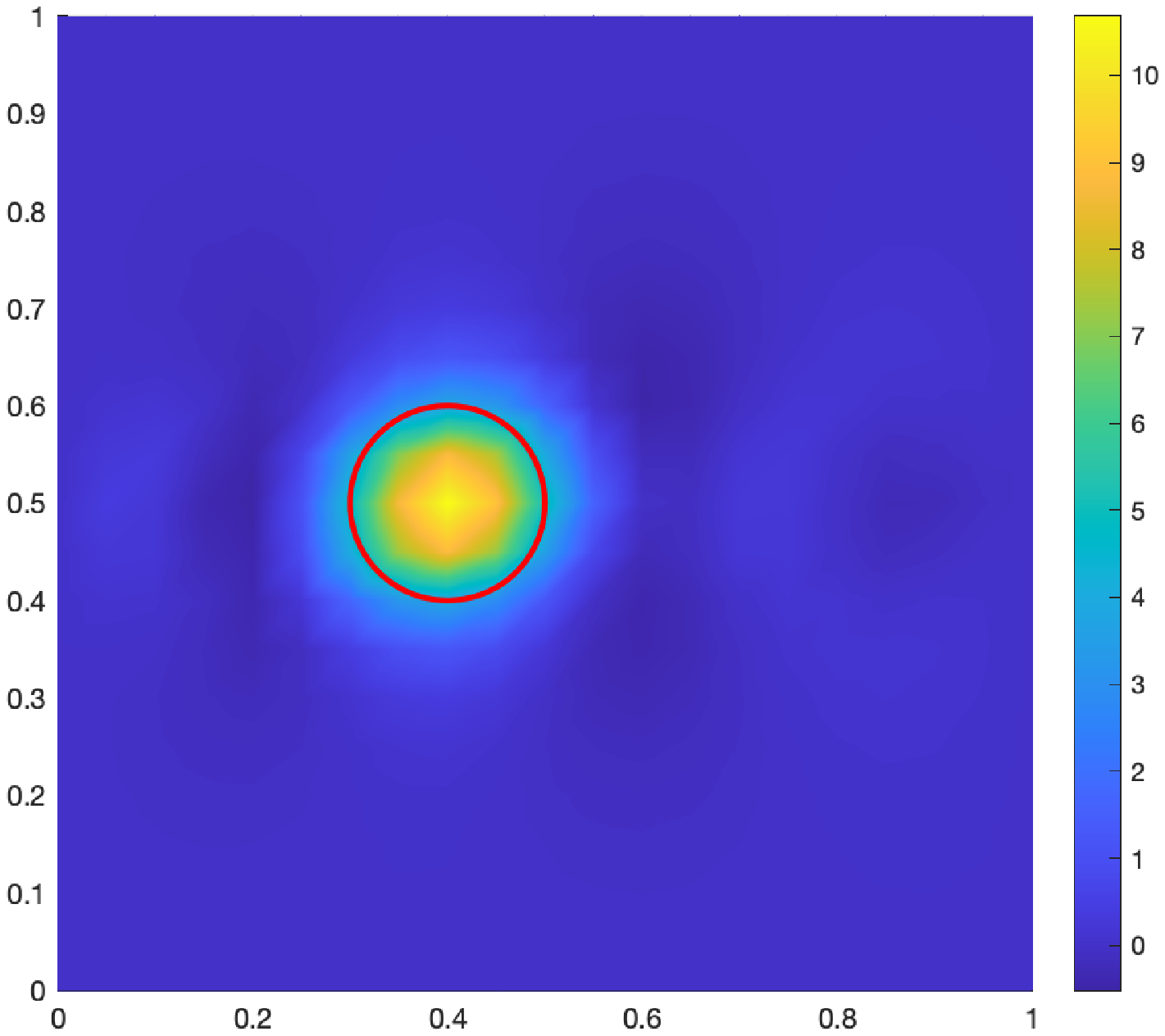}}
	\subfigure[$s=0.5$]{\includegraphics[width=0.19\linewidth]{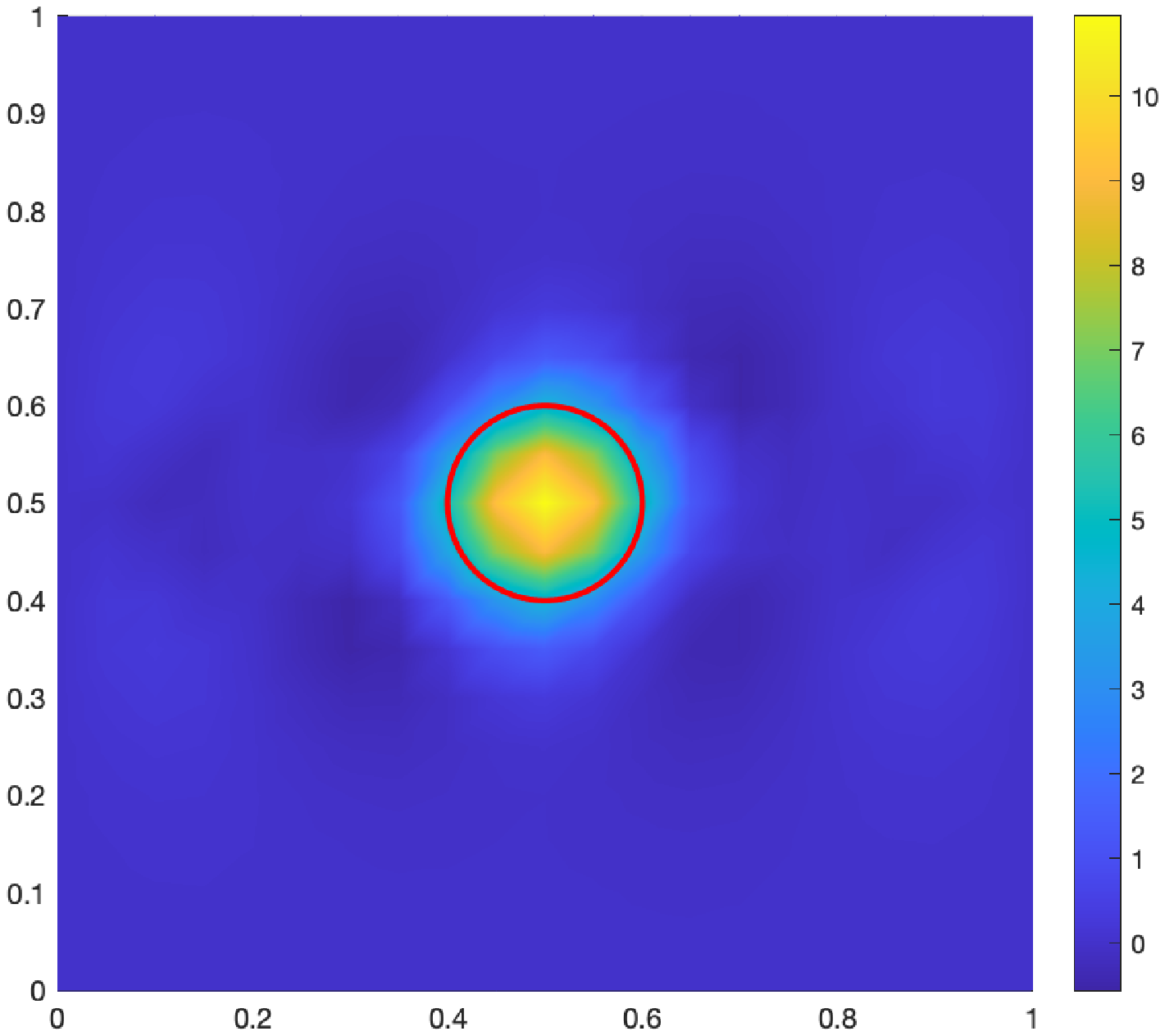}}
	\subfigure[$s=0.6$]{\includegraphics[width=0.19\linewidth]{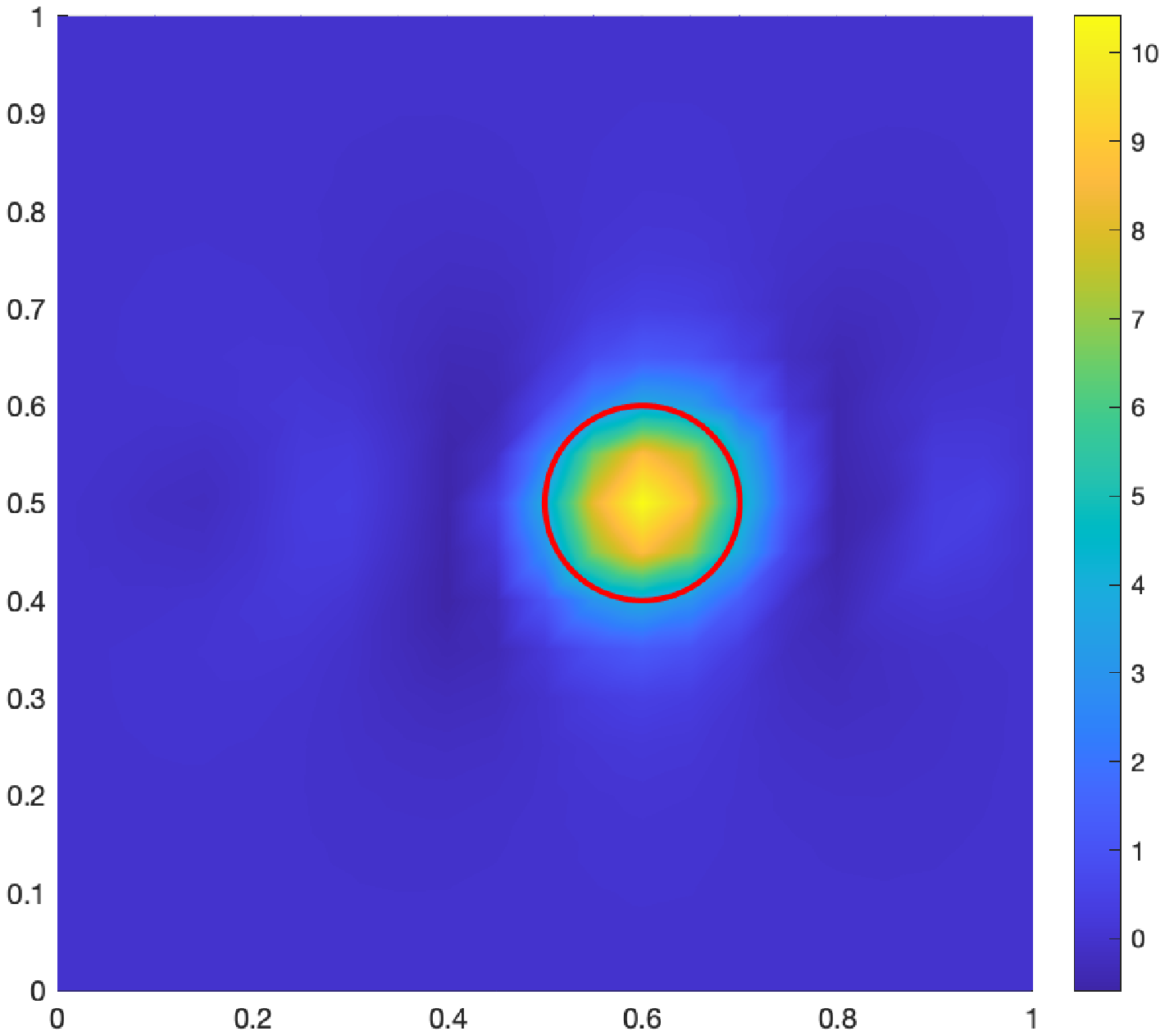}}	
	\subfigure[$s=0.7$]{\includegraphics[width=0.19\linewidth]{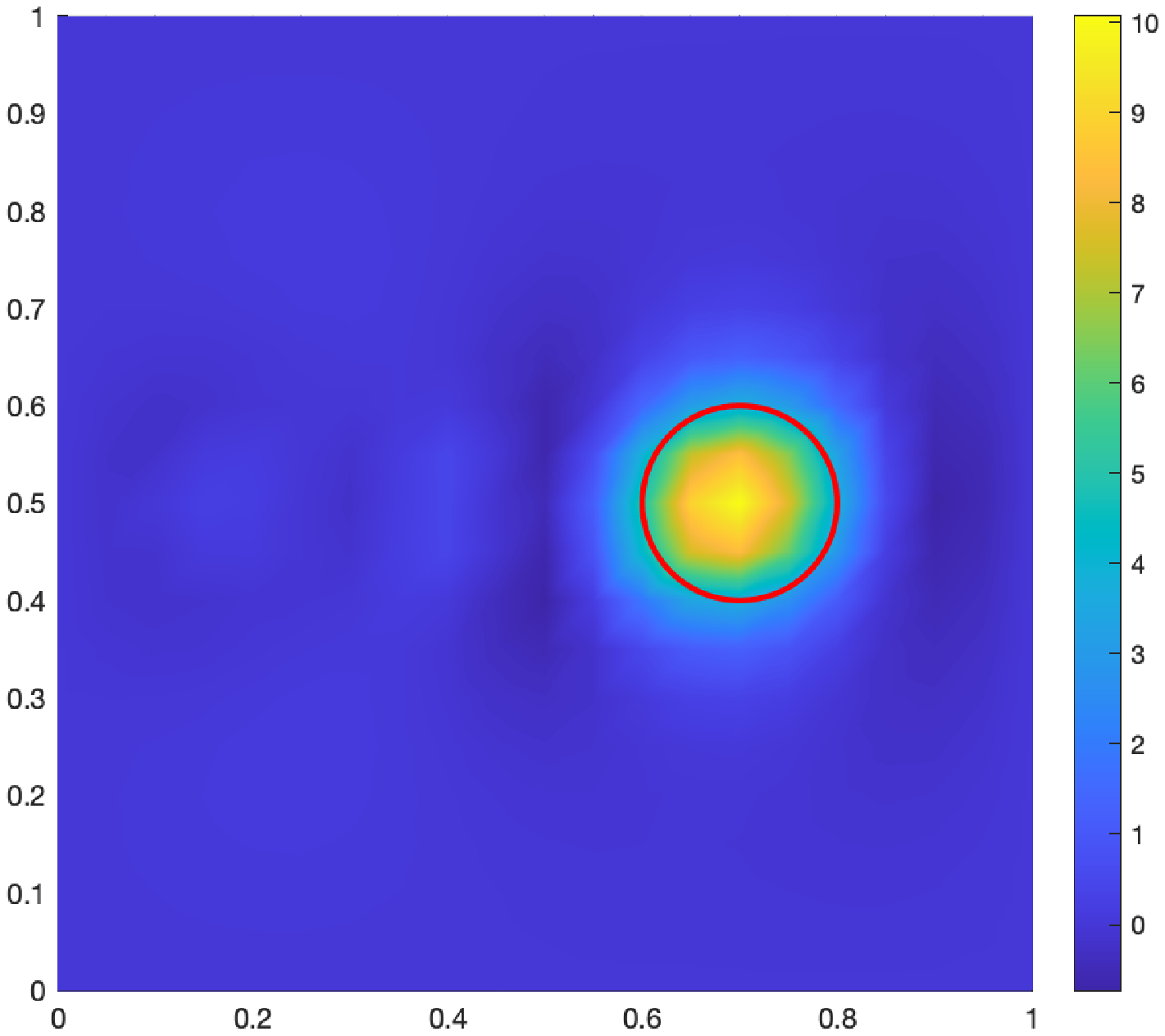}}
	\caption{Recovering Moving Circles with centers point at different $x$, red circle: ground truth}
	\label{fig:eg2_moving_circles}
\end{figure}

\begin{figure}[h]
	\centering
	\subfigure[$\theta=0$]{\includegraphics[width=0.24\linewidth]{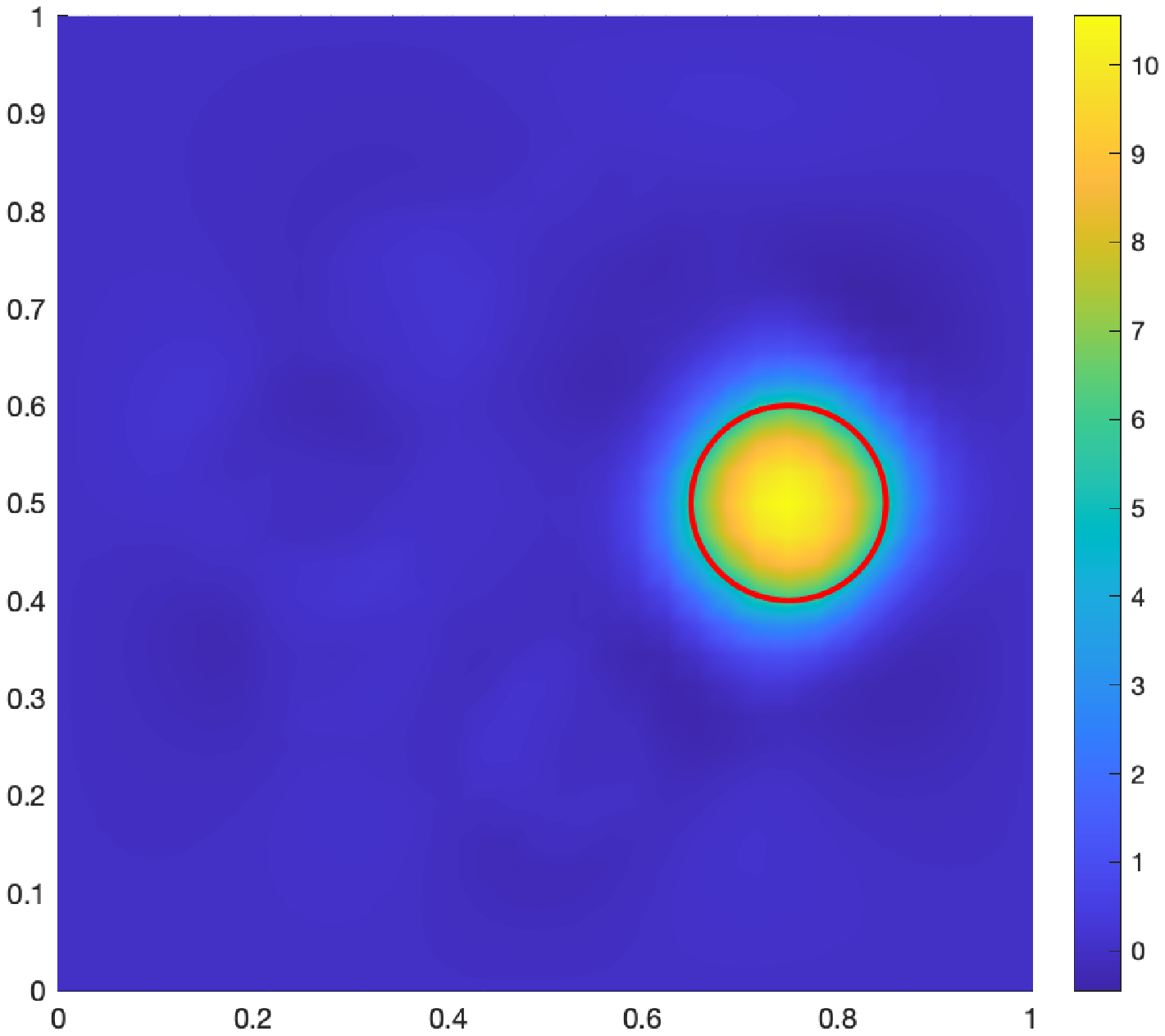}}
	\subfigure[$\theta=\frac{\pi}{4}$]{\includegraphics[width=0.24\linewidth]{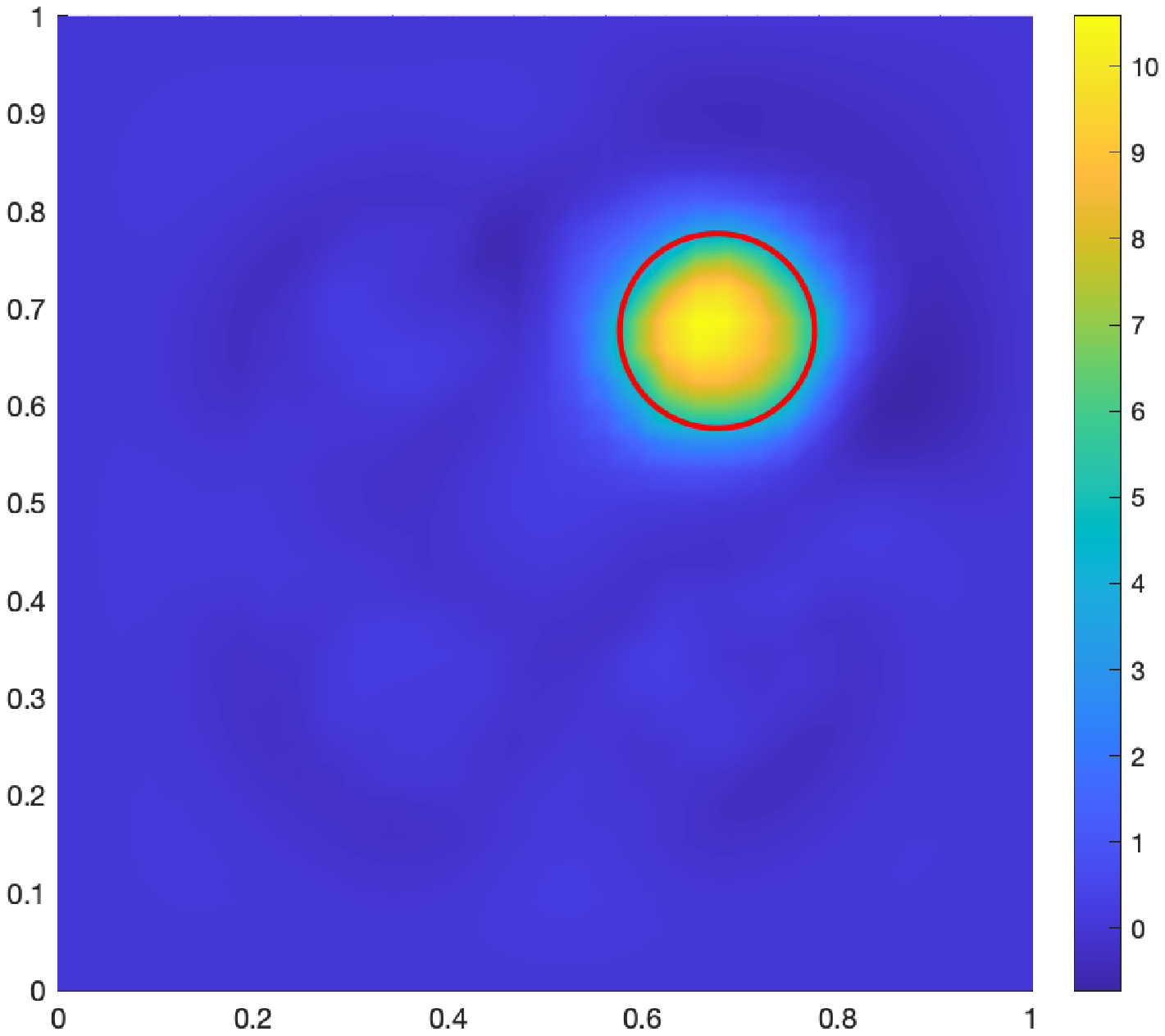}}
	\subfigure[$\theta=\frac{\pi}{2}$]{\includegraphics[width=0.24\linewidth]{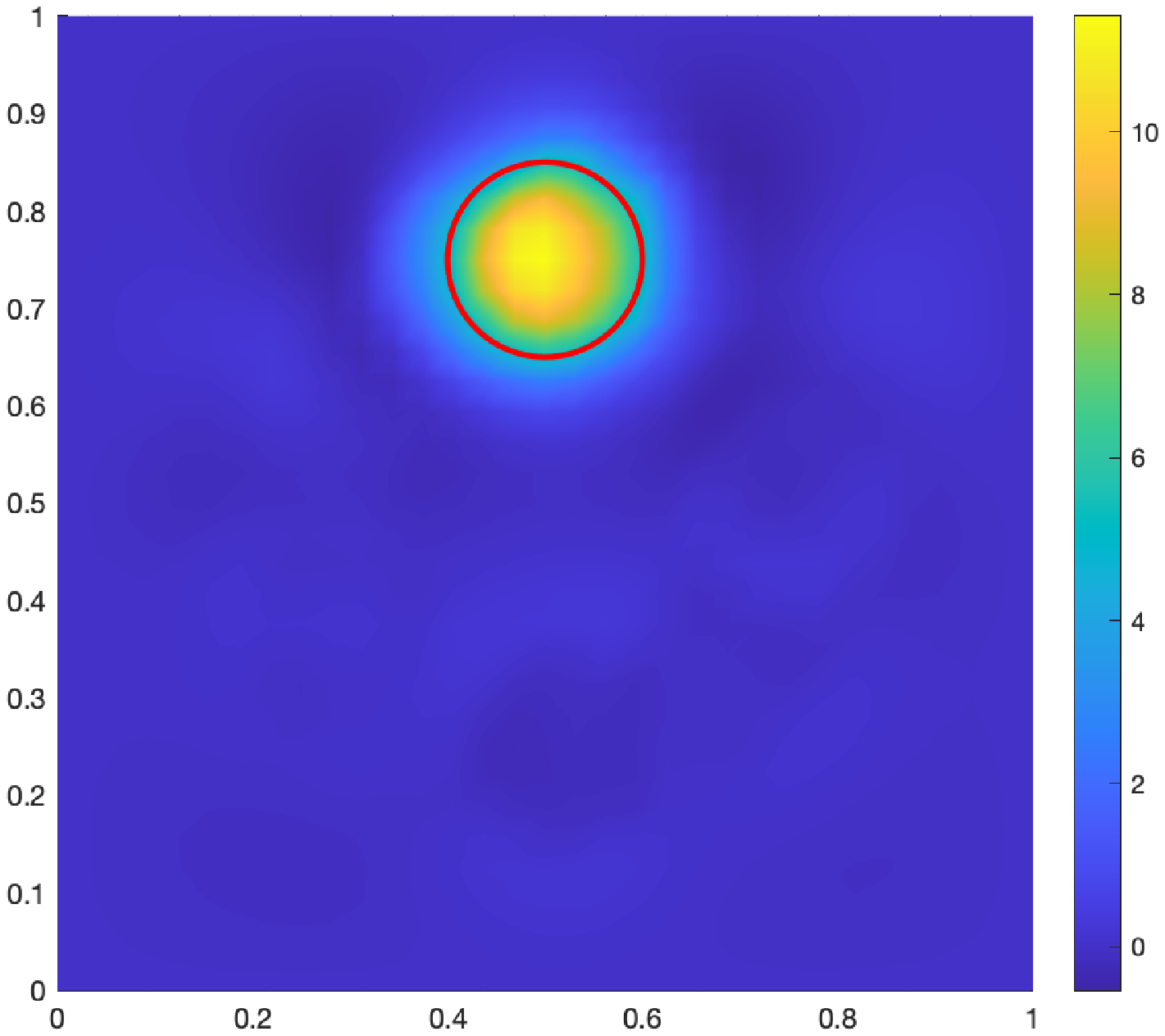}}
	\subfigure[$\theta=\frac{3\pi}{4}$]{\includegraphics[width=0.24\linewidth]{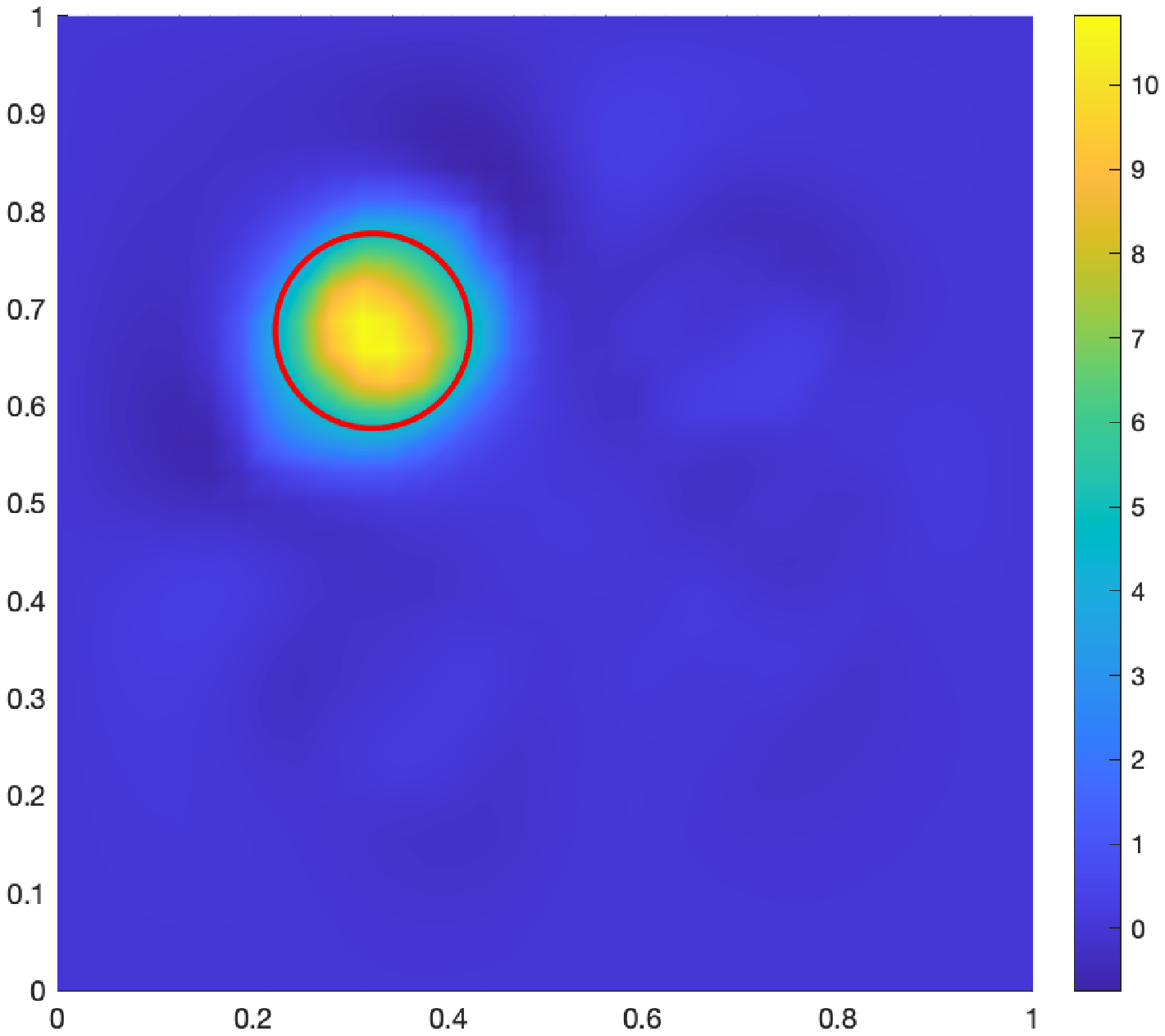}}	\\
	\subfigure[$\theta=\pi$]{\includegraphics[width=0.24\linewidth]{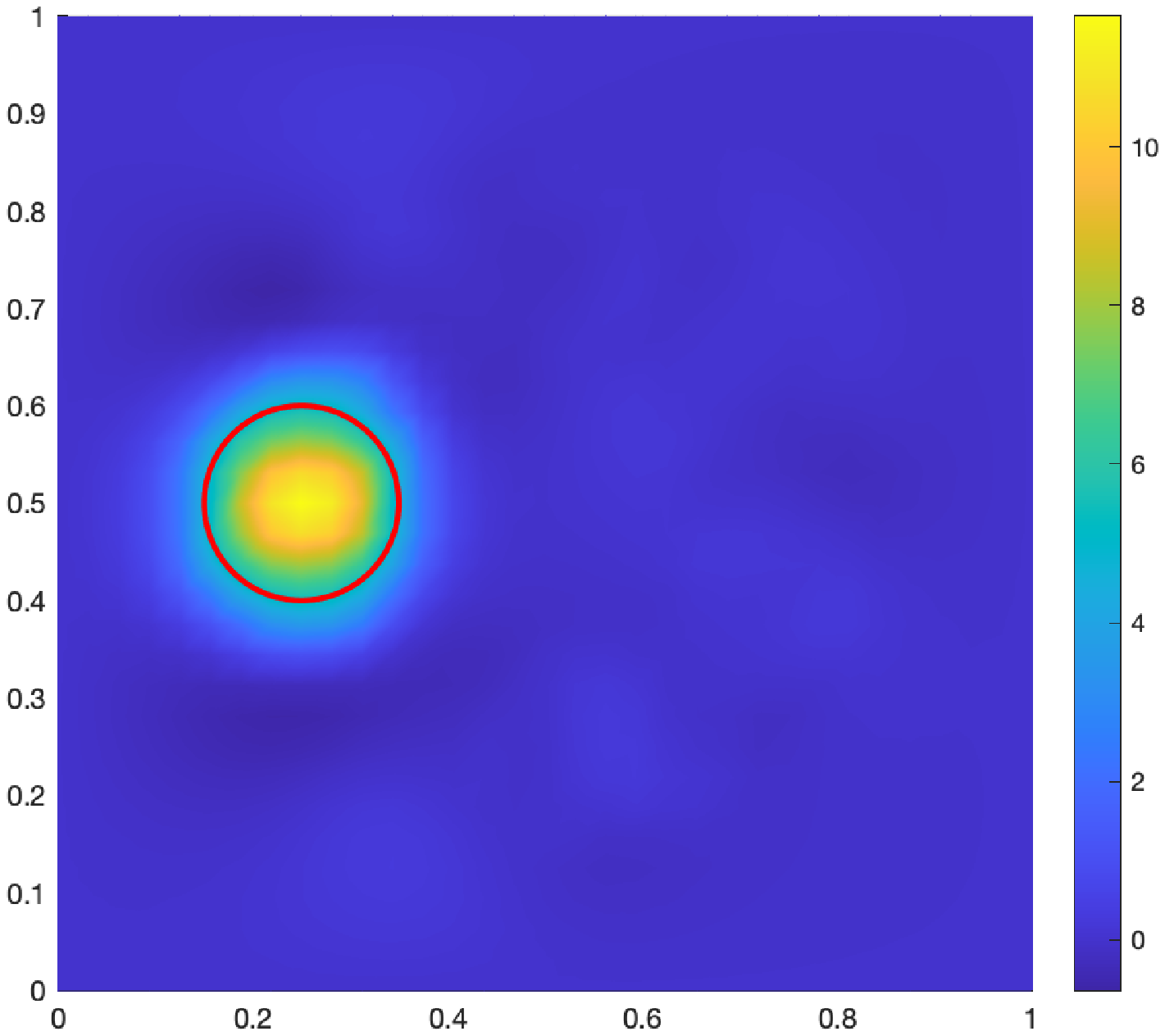}}	
	\subfigure[$\theta=\frac{5\pi}{4}$]{\includegraphics[width=0.24\linewidth]{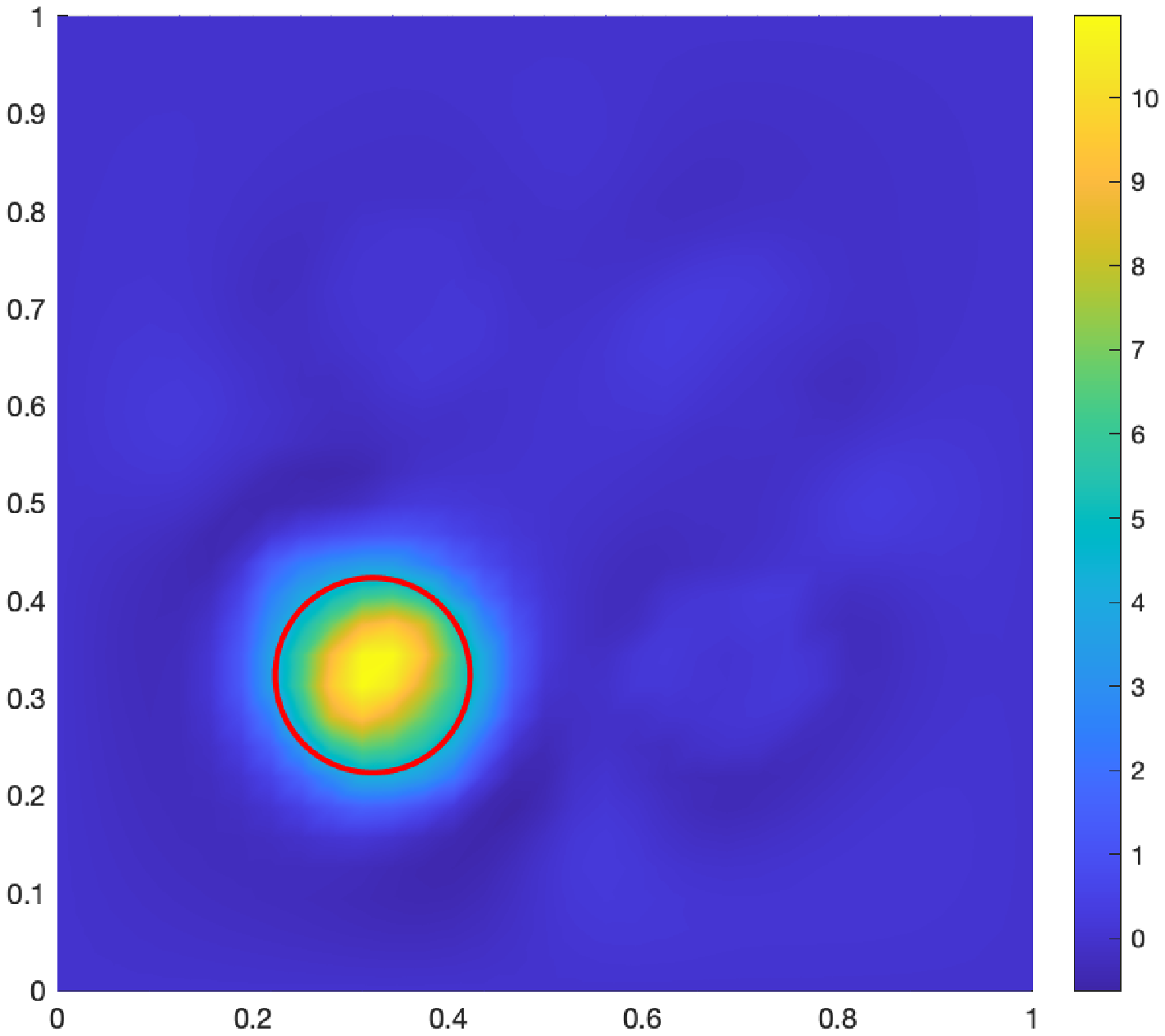}}	
	\subfigure[$\theta=\frac{3\pi}{2}$]{\includegraphics[width=0.24\linewidth]{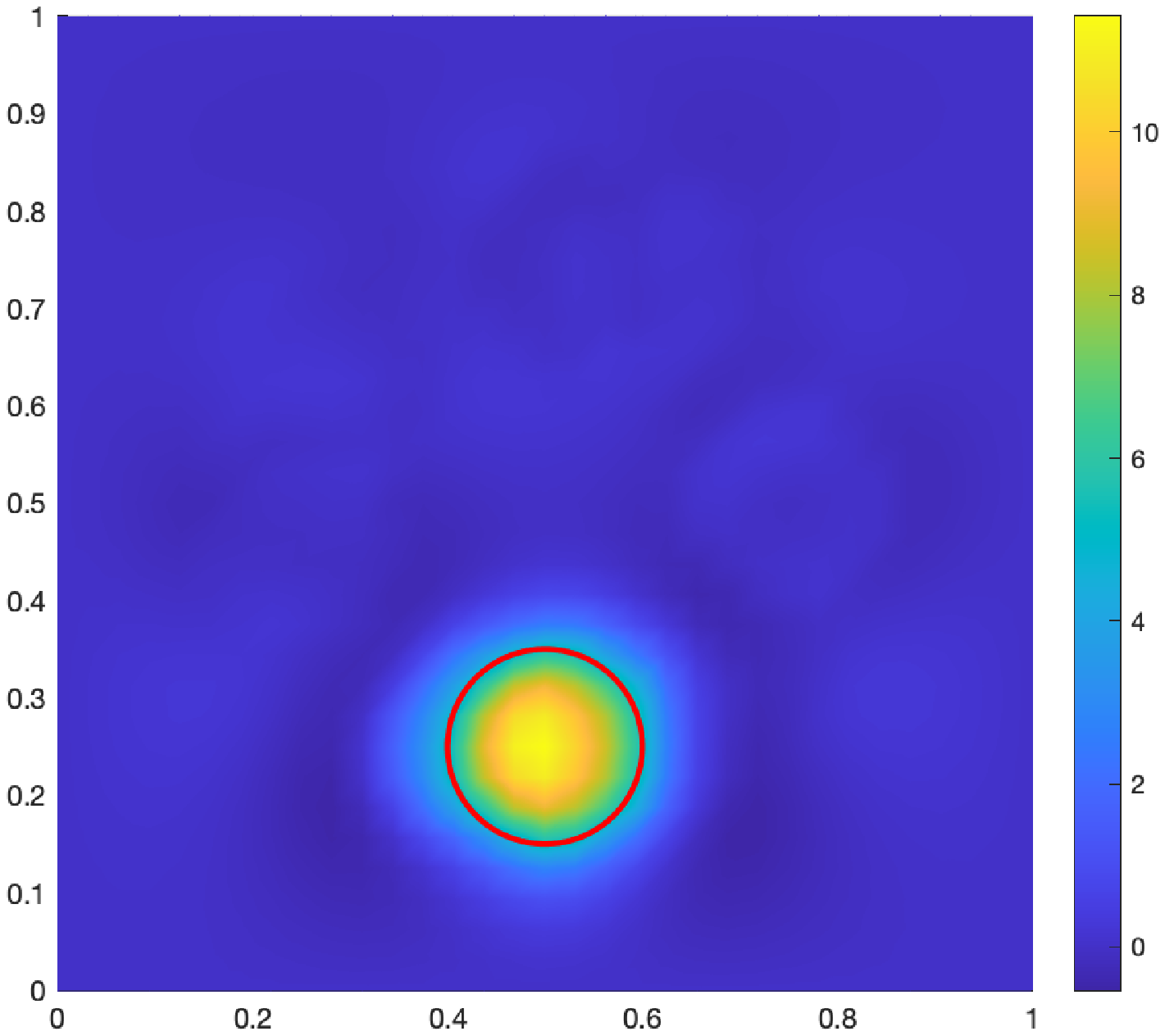}}	
	\subfigure[$\theta=\frac{7\pi}{4}$]{\includegraphics[width=0.24\linewidth]{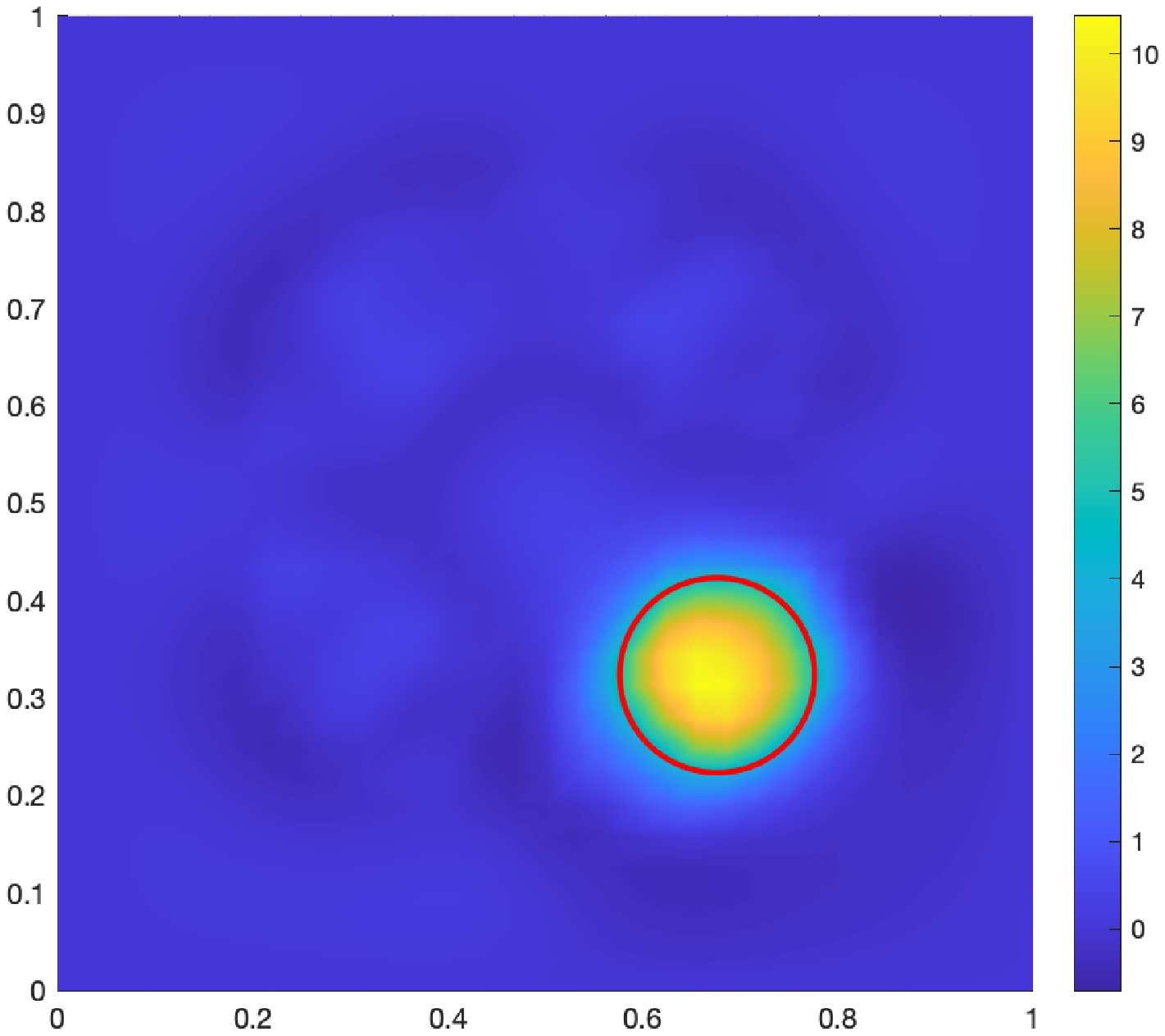}}	
	\caption{Recovering Moving Circles with centers point at different $x$, red circle: ground truth}
	\label{fig:eg2_moving_circles_2}
\end{figure}

\section{Conclusion}\label{Sec:conclusion}
\noindent 
We have developed a data-driven and model-based approach for solving parabolic inverse source problems. The key idea is to exploit (model-based) and construct (data-based) the intrinsic approximate low-dimensional structure of the underlying parabolic PDEs that consists of two components -- a training component that constructs a small number of POD basis functions to achieve significant dimension reduction in the solution space, and a fast algorithm that computes the optimization problem in inverse source problem by using the constructed POD basis functions. Hence, we achieve an effective data and model-based approach for the inverse source problems and overcome the typical computational bottleneck of FEM in solving such problems.

Under a weak assumption on the regularity of the solution, we provide the convergence analysis of our POD algorithm in solving the forward parabolic PDEs and thus obtain the error estimate of the POD algorithm for the parabolic inverse source problems. Finally, we carried out numerical experiments to demonstrate the accuracy and efficiency of the proposed method. Through numerical results, we found that our POD algorithm yields as good approximations as the reference solution obtained by the FEM. But our algorithm can be much cheaper. We expect an even better performance of efficiency can be obtained in 3D problems, which is one of our future topics. We also studied other issues of the POD algorithm, such as the dependence of the error on the mesh size, on the regularization parameter in the least-squares regularized minimization problems, and on the number of POD basis functions.   

\section*{Acknowledgement}
\noindent
The research of W. Zhang is  supported by the National Natural Science Foundation of China No. 11901282 and the Shenzhen Sci-Tech Fund No. RCBS20200714114941241. The research of Z. Zhang is supported by Hong Kong RGC grant projects 17300318 and 17307921, National Natural Science Foundation of China  No. 12171406, Seed Funding Programme for Basic Research (HKU), and Basic Research Programme (JCYJ20180307151603959) of The Science, Technology and Innovation Commission of Shenzhen Municipality.

\bibliographystyle{plain}
\bibliography{ZWpaper}

\end{document}